% !TEX encoding = UTF-8 Unicode
\documentclass{article}
\usepackage{fullpage} 
\usepackage{ulem}
% for smaller margins
\usepackage{amsmath}   %\mathcal
\usepackage{amssymb}   %basic
\usepackage{amsthm}    %theoremstyle
\usepackage{amsfonts}
\usepackage{physics}
\usepackage[capitalise]{cleveref}
\usepackage{bbm}
\usepackage[T1]{fontenc}   % pour les accents
\usepackage{algpseudocode}
\usepackage{tikz}
\usepackage{amsmath}
\usepackage{pgf,tikz}
\usepackage{graphicx} 
\graphicspath{{./}{figures/}} 
\usetikzlibrary{arrows}
\usepackage{amsopn}
\usepackage{adjustbox}

\def \to{\rightarrow}
\def \states{\mathbb{T}^d}
\def \T{\mathbb{T}}
\def \R {\mathbb{R}}
\def \N {\mathbb{N}}

\def \mes{\mathcal{P}}

\def \mint{\int_{\states}}

\newcommand{\be}{\begin{equation}}
\newcommand{\ee}{\end{equation}}

\theoremstyle{definition}

\theoremstyle{plain}
\newtheorem{theorem}{Theorem}[section]
\newtheorem{proposition}[theorem]{Proposition}

\newtheorem{lemma}[theorem]{Lemma}

\theoremstyle{remark}
\newtheorem{remark}[theorem]{Remark}

\numberwithin{equation}{section}

\begin{document}

\title{On the quadratic convergence of Newton's method for  Mean Field Games with non-separable Hamiltonian}
\date{}
\author{Fabio Camilli \and Qing Tang}

\maketitle

\begin{abstract}
We analyze  asymptotic convergence properties of Newton's method for a class of evolutive Mean Field Games systems with non-separable Hamiltonian arising in mean field type models with congestion. We prove the well posedness of the Mean Field Game system with non-separable Hamiltonian and of the linear system giving the Newton iterations. Then, by forward induction and assuming that the initial guess is sufficiently close to the solution of problem, we show a quadratic rate of convergence  for the approximation of the Mean Field Game system by Newton's method. We also consider the case of a nonlocal coupling, but with separable Hamiltonian, and we show a similar rate of convergence.
\end{abstract}
\noindent
{\bf AMS Subject Classification:} 49N70, 91A13, 35Q80, 65M12\\
{\bf Keywords:} {~Mean Field Games, non-separable Hamiltonian, Newton method, congestion model, numerical methods}

\section{Introduction}
Mean Field Games (MFG in short)  theory, introduced in \cite{ll,hcm}, arises in the study of differential games with an infinite number of rational agents. The corresponding literature  is now vast and concerns both theoretical and applicative aspects, see \cite{achdouCetraro,carmona2018,Lions} and references therein. In this regard, a  significant part of it is dedicated to the study of numerical methods and algorithms for the computation of the solution  to the MFG model, both in the formulation as a PDEs system   and as an optimal control problem of a PDE. Such   approaches, just to mention a few, include finite differences, semi-Lagrangian methods and Fourier expansions with regard to the approximation methods (see  \cite{ad,acd,achdou2012,achdou2015,blp,ccg,cs, lauriere2021,lst,li2021simple,ns,tang2022learning}).   Many of these methods exploit  the variational structure of the problem, concerns the case in which the coupling term involving the distribution of the population   is separated from the Hamiltonian, while relatively few works have been dedicated to the so-called non-separable case
\be\label{MFG}
\left\{\begin{aligned}
	(i) \qquad &-\partial_t u -\Delta u+H(x,m,Du)=f(m) \qquad &&{\rm in}\,\,Q,\\
	(ii) \qquad &\partial_t m-  \Delta m  - \text{div} \big( mH_p(x,m,Du)\big) =0 &&{\rm in}\,\,Q, \\
	&m(x,0)=m_0(x) , \; u(x,T)=u_T(x) &&{\rm in}\,\,\T^d. 
\end{aligned}\right.
\ee
($\T^d$ is the unit torus and $Q=\T^d\times (0,T)$). 
Moreover, the non-separable case is very important in applications   to model congestion effects, i.e. situations in which the cost of displacement of the agents increases in those regions where the density is large. MFGs models including congestion were introduced   in \cite{Lions} and   a typical Hamiltonian in such cases is
\begin{equation*} 
	H(x,m,p)=\frac{h(x)|p|^2}{(1+m)^\alpha}, \quad \alpha>0.
\end{equation*}
Global in time weak solution to \cref{MFG} has been considered  in \cite{achdou2018,ghattassi2023non},
 short time existence and uniqueness of regular solution     in \cite{cirant2020,ambrose2023well} and the stationary case in \cite{gomes}. In general, MFGs with non-separable Hamiltonian do not have a variational structure and this causes a restriction on the choice of numerical methods. Moreover and in general, implicit schemes are preferred as they enhance the stability and efficiency compared to explicit schemes. To design implicit finite difference schemes, iterative methods are needed to reduce the problem to a sequence of linear systems. Iterative methods employed in solving MFGs include Newton's method \cite{achdou2012,achdou2015,berry2024approximation,lauriere2021}, fixed point iteration, fictitious play, policy iteration \cite{lst}, smoothed policy iteration \cite{tang2022learning} etc. In particular, numerical solution of MFGs with non-separable Hamiltonian  have been discussed  in e.g. \cite{achdou2012,achdou2015,lauriere2021,lst,ghattassi2023non}.\par
 In this paper, we consider  Newton's method  from a continuous standpoint, viewing it as a linear system of partial differential equations (PDEs) which approximate the nonlinear problem \eqref{MFG}.   Newton's method (also known as the Newton Kantorovich method) is effective for convex optimization problems (\cite{boyd2004}) or for solving nonlinear functional equations in Banach spaces, cf. \cite{ciarlet2012newton}. The novelty and main contributions of this work is theoretical.  We rigorously establish a quadratic rate of convergence of the   method in a neighborhood of the solution of \cref{MFG}. 
 In the study of Newton's method to \cref{MFG}, a critical point is in establishing the well-posedness of the linearized MFG system. To address this, we broaden the theoretical framework developed in \cite{cardaliaguet2019,bc} for analyzing master equations. This extension is applicable to MFGs with non-separable Hamiltonians, subject to certain Hessian-type monotonicity conditions. Recently, the convergence analysis of Newton's method has been considered  in \cite{berry2024approximation} for stationary MFGs with separable Hamiltonian. 
\par
The Newton  method for  system \cref{MFG} reads as follows. Writing the MFG system as an operator equation $\mathcal{F}(u,m)=0$ and denoting by $\mathcal{LF}_{(\check{u},\check{m})}$ the linearized $\mathcal{F}$ at $(\check{u},\check{m})$ and by $\mathcal{LF}_{(\check{u},\check{m})}^{-1}$ the inverse of $\mathcal{LF}_{(\check{u},\check{m})}$, then we get
\begin{equation*}
(u^{n},m^n)=(u^{n-1},m^{n-1})-\mathcal{LF}^{-1}_{(u^{n-1},m^{n-1})}\cdot\mathcal{F}(u^{n-1},m^{n-1}),
\end{equation*}
or equivalently 
\begin{equation}\label{LF}
\mathcal{LF}_{(u^{n-1},m^{n-1})}\cdot \big(u^{n}- u^{n-1},m^n-m^{n-1}\big)=-\mathcal{F}(u^{n-1},m^{n-1}).
\end{equation}
The previous identity in PDE form reads as
\be
\left\{\begin{aligned}
(i)\qquad &-\partial_t (u^{n}- u^{n-1}) -\Delta (u^{n}- u^{n-1})+H_p(x,m^{n-1},Du^{n-1})D(u^{n}- u^{n-1})\\
&+H_m(x,m^{n-1},Du^{n-1}) (m^{n}- m^{n-1})-f'(m^{n-1})(m^n-m^{n-1})\\
={}&\partial_t u^{n-1} +\Delta u^{n-1}-H(x,m^{n-1},Du^{n-1})+f(m^{n-1})\qquad&&{\rm in}\,\,Q, \\
(ii)\qquad &\partial_t (m^{n}- m^{n-1})- \Delta (m^{n}- m^{n-1}) - \text{div} \big( (m^{n}- m^{n-1}) H_p(x,m^{n-1},Du^{n-1})\big)\\
&-\text{div} \Big(m^{n-1} H_{pm}(x,m^{n-1},Du^{n-1})(m^{n}- m^{n-1})\Big) \\
&-{\rm{div}}\Big(m^{n-1}H_{pp}(x,m^{n-1},Du^{n-1})(Du^n-Du^{n-1})\Big)\\
={}&-\partial_t m^{n-1}+  \Delta m^{n-1} +\text{div} \big( m^{n-1}H_p(x,m^{n-1},Du^{n-1})\big) &&{\rm in}\,\,Q,    \\
&m^n(x,0)=m_0(x) , \; u^n(x,T)=u_T(x) &&{\rm in}\,\,\T^d,
\end{aligned}\right.
\ee
and, after simplification, we get the coupled linear system in the unknown $(u^{n},m^{n})$
\be\label{Newton system}
\left\{\begin{aligned}
(i)\qquad &-\partial_t u^n -\Delta u^n+H_p(x,m^{n-1},Du^{n-1})D(u^{n}- u^{n-1})+H_m(x,m^{n-1},Du^{n-1}) (m^{n}- m^{n-1})\\
={}&-H(x,m^{n-1},Du^{n-1})+f(m^{n-1})+f'(m^{n-1})(m^n-m^{n-1})\qquad&&{\rm in}\,\,Q,  \\
(ii)\qquad &\partial_t m^n- \Delta m^n  - \text{div} \big( m^nH_p(x,m^{n-1},Du^{n-1})\big) \\
={}&{\rm{div}}\Big(m^{n-1}H_{pp}(x,m^{n-1},Du^{n-1})(Du^n-Du^{n-1})\Big)\\
&+\text{div} \Big(m^{n-1} H_{pm}(x,m^{n-1},Du^{n-1})(m^{n}- m^{n-1})\Big)&&{\rm in}\,\,Q,    \\
&m^n(x,0)=m_0(x) , \; u^n(x,T)=u_T(x) &&{\rm in}\,\,\T^d.
\end{aligned}\right.
\ee
Assuming that the Hamiltonian is regular and satisfies a classical monotonicity condition (see \cite{Lions,achdou2018})   we obtain existence and  uniqueness of a classical solution $(u,m)$ to \cref{MFG}, see \cref{Well posed}. Then we  prove the well posedness of  \cref{Newton system} at each iteration and the   quadratic rate of convergence of the Newton iteration    to   the solution of the MFG system when the initial guess $(u^0,m^0)$ is sufficiently close to $(u,m)$.  We remark that, even though the algorithm is presented for evolutive MFGs, the ideas extend naturally to stationary MFGs as well. \par

This paper primarily focuses on analyzing the convergence of the Newton method for the MFG system at the level of PDEs.  In the MFG literature, this iterative  method has been applied to solve the nonlinear finite dimensional system
which results via a finite differences approximation of \cref{MFG}, see  \cite{achdou2012,achdouCetraro}.	 The   algorithm is usually coupled with a continuation method (typically with respect to the viscosity parameter). Indeed,  it is important to have a good initial guess of the solution and, for that, it is possible to take advantage of the continuation method by choosing the initial guess as the solution obtained with the previous value of the parameter (see \cite{lauriere2021}). Alternatively, the Newton  method may be selectively employed to tackle the Hamilton-Jacobi equation at each iteration while using a fixed point iterations for the MFG system (as in e.g. \cite{ghattassi2023non,li2021simple}).  Another approach involves employing a nonlinear discretized system, as presented in \cite{ad,acd}, followed by the application of automatic differentiation for Newton's iteration. 
Within this context, a significant challenge involves establishing a priori estimates for the finite difference scheme, ensuring the stability of the region of attraction of the method  with respect to the discretization parameters. Our convergence result can   be interpreted as an intermediate step in the proof of the convergence  of the Newton  method for   finite dimensional approximation to the MFG system.  However, it is important to note that the convergence analysis presented in this work does not readily extend to the discretized system, and addressing this is left for our future works. 
\par
The paper is organized as follows. In   \cref{sec:prelim}, we introduce some notations and    preliminary results. In    \cref{sec:newton_local}, we discuss the convergence of the Newton method for a non-separable Hamiltonian  and  local coupling, while  in   \cref{sec:newton_nonlocal} the case of a separable Hamiltonian and nonlocal coupling. Finally, the   \cref{sec:appendix} is devoted to the proof of some basic results necessary for the rest of the paper.
\par

%%%%%%%%%%%%%%%%
%              %
%%%%%%%%%%%%%%%%
\section{Preliminaries}\label{sec:prelim}
In this section, we introduce the assumptions on the Hamiltonian, prove the well posedness of \eqref{MFG} and some preliminary results necessary for the estimate of the rate in the next section.\par
Throughout the paper, we work in the $d-$dimensional torus $\T^d$ (i.e. periodic boundary conditions). We consider the set $\mes(\T^d)$ of Borel probability measures on $\T^d$ is endowed with the Monge Kantorovich (Wasserstein) distance: for $m,m' \in \mes(\T^d)$, ${\bf{d}}_1(m,m')=\sup_\phi \int_{\T^d}\phi (x)d(m-m')(x)$ where the supremum is taken over all Lipschitz maps $\phi: \T^d\rightarrow \R$ with a Lipschitz constant bounded by $1$. In particular, we have that ${\bf{d}}_1(m,m')\leq  \|m-m'\|_{\mathcal C^0(\T^d)}$ if $m,m'\in \mes(\T^d)\cap \mathcal C^0(\T^d)$. Given a map $f:\T^d\times \mes(\T^d)\rightarrow \R^d$, we will use the notation $\frac{\delta f}{\delta m}$ for the derivative of $f$ w.r.t $m$, as introduced in \cite[Section 2.2]{cardaliaguet2019}. $\frac{\delta f}{\delta m}:\states\times \mes(\states)\times \T^d\to \R$ is a continuous map such that
$$
f[m'](x)-f[m](x)= \int_0^1\mint \frac{\delta f}{\delta m}[(1-s)m+sm'](x)(y)d (m'-m)(y) ds.
$$  
The above relation defines the map $\frac{\delta f}{\delta m}$ only up to a constant. We always use the normalization 
$$\int_{\T^d} \frac{\delta f}{\delta m}[m](x)(y)dm(y)=0.$$
 Higher order derivatives are defined similarly.\par
 \par
The set $\mathcal C^{1,0}(Q)$ with the norm 
$$
\|u\|_{\mathcal C^{1,0}(Q)}= \|u\|_{\mathcal C^{0}(Q)}+ \|D u\|_{\mathcal C^{0}(Q;\R^d)}
$$
is the space of continuous functions on $Q$ with continuous derivatives in the $x-$variable, up to the parabolic boundary. 
We also recall the definition of parabolic H\"older spaces on the torus (we refer to \cite{LSU} for a more comprehensive discussion). For $0<\alpha<1$, we denote 
\begin{equation}\label{def Holder}
[u]_{C^{\alpha,\frac{\alpha}{2}}(Q)}:=\sup_{(x_1,t_1),(x_2,t_2)\in Q}\frac{\vert u(x_1,t_1)-u(x_2,t_2)\vert }{(d(x_1,x_2)^2+\vert t_1-t_2\vert )^{\frac{\alpha}{2}}} ,
\end{equation}
where $d(x,y)$ stands for the geodesic distance from $x$ to $y$ in ${\mathbb{T}^d}$. The parabolic H\"older space $C^{\alpha,\frac{\alpha}{2}}(Q)$ is the space of functions $u\in L^\infty(Q)$ for which $[u]_{C^{\alpha,\frac{\alpha}{2}}(Q)}<\infty$. It is endowed with the norm:
\begin{equation*}
\|u\|_{\mathcal C^{\alpha,\frac{\alpha}{2}}(Q)}:=\|u\|_{\mathcal C^{0}(Q)}+[u]_{C^{\alpha,\frac{\alpha}{2}}(Q)}.
\end{equation*}
The space $\mathcal C^{1+\alpha,\frac{1+\alpha}{2}}(Q)$ and $\mathcal C^{2+\alpha,1+\alpha/2}(Q)$ are endowed with the norms
\begin{equation}\label{Holder alpha+1}
\|u\|_{\mathcal C^{1+\alpha,\frac{1+\alpha}{2}}(Q)}:=\|u\|_{\mathcal C^{0}(Q)}+\sum_{i=1}^d\| \partial_{x_i}u\|_{C^{\alpha,\frac{\alpha}{2}}(Q)}+\sup_{(x_1,t_1),(x_2,t_2)\in Q}\frac{\vert u(x_1,t_1)-u(x_2,t_2)\vert }{\vert t_1-t_2\vert ^{\frac{1+\alpha}{2}}} ,
\end{equation}
 \begin{equation}\label{C 2+a}
\|u\|_{\mathcal C^{2+\alpha,1+\alpha/2}(Q)}:=\|u\|_{\mathcal C^0(Q)}+\sum_{i=1}^d\|\frac{\partial u}{\partial x_i}\|_{\mathcal C^{1+\alpha,\frac{1+\alpha}{2}}(Q)}+\|\frac{\partial u}{\partial t}\|_{\mathcal C^{\alpha,\alpha/2}(Q)}.
\end{equation}

We now introduce some useful anisotropic Sobolev spaces to handle time-dependent problems. First, given a Banach space $X$, $L^p(0,T;X)$ denotes the usual vector-valued Lebesgue space for $p\in [1,\infty]$. For any $r\geq1$, we denote by $W^{2,1}_r(Q)$ the space of functions $u$ such that $\partial_t^{{\delta}}D^{\sigma}_x u\in L^r(Q)$ for all multi-indices $\sigma$ and ${\delta}$ such  that $\vert \sigma \vert+2{\delta}\leq  2$, endowed with the norm
\begin{equation*}
	\|u\|_{W^{2,1}_r(Q)}=\Big(\int_{Q}\sum_{\vert \sigma \vert+2{\delta}\leq2}\vert \partial_t^{{\delta}}D^{\sigma}_x u\vert ^rdxdt\Big)^{\frac1r}.
\end{equation*}
We recall that, by classical results in interpolation theory, the sharp  space of initial (or terminal) trace of $W^{2,1}_r(Q)$ is given by the fractional Sobolev class $W^{2-\frac{2}{r}}_r({\mathbb{T}^d})$.\par
We define $W^{1,0}_r(Q)$ as the space of functions on~$Q$ such that the norm
\[
\norm{u}_{W^{1,0}_r(Q)}:=\|u\|_{L^r(Q)}+ \sum_{i=1}^d\|\frac{\partial u}{\partial x_i}\|_{L^r(Q)}
\]
is finite and  we denote with $\mathcal{H}_r^{1}(Q)$ the space of   functions $u\in W^{1,0}_r(Q)$ with $\partial_t u\in (W^{1,0}_{r'}(Q))'$, equipped with the natural norm
\begin{equation*}
	\|u\|_{\mathcal{H}_r^{1}(Q)}:=\|u\|_{W^{1,0}_r(Q)}
	+\norm{\partial_tu}_{(W^{1,0}_{s'}(Q))'}\ .
\end{equation*}
From \cite[Theorem A.3 (iii)]{Meta} and \cite[Proposition 2.1 (iii)]{Cirant2019}, for $r>d+2$ the space $\mathcal H^1_r(Q)$ is continuously embedded in $\mathcal C^{\alpha/2,\alpha}(Q)$, for some $\alpha\in (0,1)$. \par

We consider the following set of assumptions for the non-separable case with   local coupling, while specific assumptions in the case of a nonlocal coupling will be discussed in Section \ref{sec:newton_nonlocal}. The notation $|\cdot|$ both refers to the modulus of a vector and to the norm of a matrix in the appropriate space.
\begin{itemize}
\item[(A1)]  $m_0\in \mes(\T^d) \cap \mathcal C^{2+\alpha}(\T^d)$ and $m_0(x)\geq \vartheta >0$, 
  $u_T\in \mathcal C^{2+\alpha}(\states)$.
	\item[(A2)] $H\in \mathcal C^4(\T^d\times \R^+ \times \R^d)$ and for all $x\in \T^d$, $m\in \R^+$, $p\in \R^d$ and some $\bar C>0$: 
\begin{equation}\label{H}
\begin{gathered}
|H_{px}(x,m,p)|\leq \bar C(|p| + 1), \,\,\,|H_{xx}(x,m,p)|+|H_{xxm}(x,m,p)| \leq \bar C (|p|^2 + 1),\\
 |H_{mp}(x,m,p)|\leq \bar C\vert p\vert,\,\,|H_{mm}(x,m,p)|\leq \bar C\vert p\vert^2,\\  |H_{pp}(x,m,p) | + |H_{ppm}(x,m,p) |\leq \bar C,\\
 |H_{ppp}(x,m,p)|+|H_{pppm}(x,m,p)|  \leq \bar C .
\end{gathered}
\end{equation}	
\begin{equation}\label{H Hessian}
 \begin{pmatrix}
-H_m(x,m,p) & \dfrac{m}{2}H_{pm}(x,m,p)^T\\[5pt]
\dfrac{m}{2}H_{pm}(x,m,p) & mH_{pp}(x,m,p)
\end{pmatrix}
>0,\quad\forall m>0.
\end{equation}
\item[(A3)]  $f(\cdot)$, $f'(\cdot)$ and $f''(\cdot)$ are uniformly bounded mappings from $\R^+$ to $\R$. Moreover, $f'(\cdot)\geq 0$.
\end{itemize}
 Some remarks about these assumptions are in order.
 \begin{remark}
\cref{H Hessian},   first proposed by P. L. Lions in \cite{Lions} and then exploited in \cite{achdou2018,achdou2015}, 
is a uniqueness condition for  the MFG systems with   non-separable Hamiltonian. In particular, it implies   that $H$ is convex with respect to $p$ and nonincreasing with respect to $m$ and, when $H$ has a separate form  $H = ~H( x, p)-\overline f(m)$, it reduces to $H_{pp}> 0$ and $\overline f '>0$. Besides for uniqueness, we use this condition to prove the  estimate in \cref{v rho estimate}, which is crucial for the rate of convergence.
 \end{remark}
\begin{remark}
A typical example of Hamiltonian which satisfies (A2) is 
\begin{equation}\label{quadratic H}
H(x,m,p)=\frac{h(x)|p|^2}{(1+m)^\alpha},
\end{equation}
where   $0<\alpha\leq 2$, $h(x)\in \mathcal C^2(\T^d)$ and $h(x)>0$ for all $x\in \T^d$. Existence and uniqueness of a weak solutions to MFGs with such Hamiltonians, under some additional assumptions, can be obtained from results in \cite{achdou2018, graber2015weak}. In \cref{Well posed}, under  the stronger assumptions (A1)-(A3), we prove existence and uniqueness of a classical solution to \cref{MFG}. 
\end{remark}
\begin{remark}
    An example of $f$ which satisfies (A3) is the sigmoid function
	$$
	f(m)=\frac{1}{1+e^{-m}}.
	$$
	\\
	In fact, the uniformly boundedness of $f$ is included in (A3) only to obtain a relatively  simple proof of existence  of a solution in the non-separable case, see   \cref{Well posed}, but  one can obtain small time existence and uniqueness  results  with less restrictions on $H$ and $f$, see \cite{cirant2020}. If we assume a priori that \cref{MFG} has a classical solution, independently of assuming  (A1)-(A3),  the key assumption
	for proving the convergence of Newton method     is the uniform boundedness of $f''(\cdot)$, see also \cref{rem_classical}. In this case, we can also include examples such as
	\begin{itemize}
		\item $f(m)=m$.
		\item $f(m)=(1+m)^\alpha$, $0<\alpha<2$.
	\end{itemize}
	Therefore, in some practical applications, we can apply the Newton method without requiring all the assumptions in this paper to be satisfied.  In any case,  restrictions on the uniform boundedness of $H_{ppp}(x,m,p)$ and $f''(m)$ are very typical for Newton iterations, c.f. S. Boyd \cite[Section 9.5.3]{boyd2004}. Some possible generalizations to the coupling $f(m)=m^\alpha$, $\alpha\geq 2$, will be discussed later in the paper, see \cref{rem_f}. It is also possible to include, under appropriate assumptions, a dependence of $f$ on $t$, but for simplicity we omit it.
\end{remark}
%%%

We will consider classical solutions to the MFG system. Recall that a classical solution of \eqref{MFG} is a couple $(u,m)$ such that $u$ and $m$ belong to $\mathcal C^{2, \alpha}(Q)$ for some $\alpha\in (0,1)$ and satisfies the problem in pointwise sense.
For the proof of the next result, see the Appendix.  
\begin{proposition}\label{Well posed}
	Under assumptions (A1), (A2) and (A3), the system \eqref{MFG} has a unique classical solution. \\
	
\end{proposition}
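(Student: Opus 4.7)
My plan is to prove existence by Schauder's fixed point theorem and uniqueness by exploiting the Hessian monotonicity condition~\eqref{H Hessian} together with $f'\geq 0$.

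For existence I would decouple the system. Fix $\alpha \in (0,1)$ and $M>0$ (to be chosen large) and set $K:=\{\bar m \in \mathcal C^{\alpha,\alpha/2}(Q):\bar m\ge 0,\ \int_{\T^d}\bar m(\cdot,t)\,dx=1,\ \|\bar m\|_{\mathcal C^{\alpha,\alpha/2}(Q)}\le M\}$, which is convex and compact in $\mathcal C^0(Q)$. Given $\bar m\in K$ I first solve the HJB equation $-\partial_t u-\Delta u+H(x,\bar m,Du)=f(\bar m)$, $u(\cdot,T)=u_T$: boundedness of $f$ from (A3) yields an $L^\infty$-bound on $u$ via the maximum principle, and the growth bounds on $H_{xx}, H_{px}, H_{pp}$ in~(A2) permit a Bernstein-type argument applied to $w=|Du|^2$ to produce a uniform Lipschitz bound on $u$. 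Standard parabolic Schauder theory then gives $u\in \mathcal C^{2+\alpha,1+\alpha/2}(Q)$ with norms controlled by the data and $M$. Inserting this $u$ into the Fokker--Planck equation $\partial_t m-\Delta m-\mathrm{div}(mH_p(x,\bar m,Du))=0$, $m(\cdot,0)=m_0$, I obtain a linear parabolic equation with Hölder coefficients, so $m\in\mathcal C^{2+\alpha,1+\alpha/2}(Q)$, with $m\ge 0$ by the maximum principle and $\int m(\cdot,t)\,dx=1$ by mass conservation. Defining $\Phi(\bar m):=m$, the bounds show $\Phi(K)\subset K$ for $M$ large, and standard stability estimates for both equations give continuity of $\Phi$ in the $\mathcal C^0(Q)$ topology; Schauder's theorem then produces a fixed point, which is a classical solution of \eqref{MFG}.

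For uniqueness I take two solutions $(u_i,m_i)$, set $u=u_1-u_2$, $m=m_1-m_2$, and compute $\frac{d}{dt}\int_{\T^d}mu\,dx$ using both equations. Integration by parts annihilates the Laplacian cross-terms, and the boundary terms vanish because $m(\cdot,0)\equiv 0$ and $u(\cdot,T)\equiv 0$. Writing $H(x,m_1,Du_1)-H(x,m_2,Du_2)=\int_0^1(H_p^s\cdot Du+H_m^s m)\,ds$ and $m_1H_p^1-m_2H_p^2=\int_0^1(mH_p^s+m_sH_{pm}^s m+m_sH_{pp}^s Du)\,ds$, with $m_s=(1-s)m_2+sm_1$ and $Du_s$ analogous, the common $mH_p^s\cdot Du$ contributions cancel and the identity reduces to
\[
\int_0^1\!\!\int_Q \big(m_s H_{pp}^s\,Du\cdot Du+m_s H_{pm}^s\,Du\,m-H_m^s\,m^2\big)\,dx\,dt\,ds+\int_Q (f(m_1)-f(m_2))\,m\,dx\,dt=0.
\]
The first term is the integral of the quadratic form associated to the Hessian matrix in~\eqref{H Hessian} evaluated at $(m,Du)$ and is therefore $\ge 0$; the second is $\ge 0$ because $f'\ge 0$. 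Hence both vanish, the positivity of $m_s$ (propagated from $m_0\ge\vartheta>0$ via the FPK equation) forces $m\equiv 0$ and $Du\equiv 0$, and plugging back into the HJB equation yields $u\equiv 0$.

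The main obstacle is the uniform Lipschitz estimate on $Du$ in the HJB, since $H$ grows quadratically in $p$; the tailored pointwise bounds on $H_{xx}$ and $H_{px}$ in~(A2) are precisely what Bernstein's method requires, after which the remaining steps reduce to textbook parabolic Schauder theory and to the clean cancellation structure exposed in the uniqueness identity.
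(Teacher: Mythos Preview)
Your approach is essentially the same as the paper's: a Schauder fixed-point argument on the Fokker--Planck component after freezing $\bar m$ in the HJB equation, followed by the Lasry--Lions duality computation exploiting~\eqref{H Hessian} and $f'\ge 0$ for uniqueness. The only cosmetic differences are that the paper takes its fixed-point set directly in $\mathcal C^0(Q)$ and obtains compactness from the map landing in $\mathcal C^{\alpha,\alpha/2}$ (so no constant $M$ to close), and that it cites the LSU quasilinear theory for the uniform Lipschitz bound on $u$ rather than spelling out Bernstein; your more detailed uniqueness computation is exactly the argument behind the reference the paper cites.
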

The next two lemmas are devoted to prove an estimate for a perturbation of the linearized MFG system. This result is   the main ingredient in our analysis of the convergence of the Newton algorithm.  
\begin{lemma}\label{stable}
Assume (A1), (A2) and (A3) and let $(u,m)$ be the the solution of \cref{MFG}. Then,  the unique weak solution $(v,\rho)$ of the  system 
	\be\label{linearized}
	\left\{\begin{aligned}
		(i) \qquad &-\partial_t v -\Delta v+H_p(x,m,Du)Dv+H_m(x,m,Du)\rho=f'(m)\rho \qquad &&{\rm in}\,\,Q,\\
		(ii) \qquad &\partial_t \rho- \Delta \rho  - {\rm{div}} \big( \rho H_p(x,m,Du)\big)-{\rm{div}} \big( m\rho H_{pm}(x,m,Du)\big) ={\rm{div}}\big(mH_{pp}(x,m,Du)Dv\big) &&{\rm in}\,\,Q, \\
		& \rho(x,0)=0 , \; v(x,T)=0 &&{\rm in}\,\,\T^d
	\end{aligned}\right.
	\ee
	is the trivial solution $(v,\rho)=(0,0)$. 
\end{lemma}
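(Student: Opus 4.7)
The plan is to carry out the classical energy/duality test that underlies uniqueness proofs for linearized MFG systems, using the Hessian-type monotonicity condition \eqref{H Hessian} as the driving mechanism. Multiply equation $(i)$ of \eqref{linearized} by $\rho$, multiply equation $(ii)$ by $v$, integrate over $Q$, and subtract the first from the second. The weak formulation of the system, together with the regularity of $(u,m)$ from \cref{Well posed}, justifies all the integrations by parts on $\T^d$ (no boundary terms by periodicity).

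The time-derivative terms combine into $\int_Q \partial_t(v\rho)\,dx\,dt$, which vanishes because $\rho(\cdot,0)=0$ and $v(\cdot,T)=0$. The Laplacian terms cancel by symmetry of integration by parts. The drift terms cancel too: $-\int_Q \mathrm{div}(\rho H_p)\,v\,dx\,dt$ produces $\int_Q \rho H_p\cdot Dv\,dx\,dt$, which exactly eats the $\int_Q H_p\cdot Dv\,\rho$ coming from $(i)\cdot\rho$. What survives is
\begin{equation*}
\int_Q\Big[-H_m(x,m,Du)\rho^2 + m\,H_{pm}(x,m,Du)^T Dv\,\rho + m\,H_{pp}(x,m,Du)Dv\cdot Dv\Big]dx\,dt + \int_Q f'(m)\rho^2\,dx\,dt = 0,
\end{equation*}
which I can rewrite as
\begin{equation*}
\int_Q \begin{pmatrix}\rho \\ Dv\end{pmatrix}^{T}\!\!\begin{pmatrix}-H_m & \tfrac{m}{2}H_{pm}^T \\ \tfrac{m}{2}H_{pm} & mH_{pp}\end{pmatrix}\!\!\begin{pmatrix}\rho \\ Dv\end{pmatrix}dx\,dt + \int_Q f'(m)\rho^2\,dx\,dt = 0.
\end{equation*}

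Since $m_0\ge\vartheta>0$ by (A1), the parabolic strong maximum principle applied to equation $(ii)$ of \cref{MFG} gives $m>0$ throughout $Q$, so by assumption \eqref{H Hessian} the matrix in the integrand is pointwise positive definite, while $f'(m)\rho^2\ge 0$ by (A3). Both integrands are thus nonnegative and must vanish almost everywhere, which forces $\rho\equiv 0$ and $Dv\equiv 0$ in $Q$. Hence $v$ is spatially constant; substituting $\rho=0$ and $Dv=0$ into equation $(i)$ yields $-\partial_t v=0$, so $v$ is constant in time as well, and the terminal condition $v(\cdot,T)=0$ concludes $v\equiv 0$.

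The main obstacle I anticipate is not the algebra, which is essentially forced by \eqref{H Hessian}, but the regularity bookkeeping needed to legitimize testing $(i)$ against $\rho$ and $(ii)$ against $v$ in the weak-solution framework. This requires that $v,\rho$ lie in a duality pair for which $\partial_t\rho\cdot v$ and $\partial_t v\cdot\rho$ make sense (e.g.\ $\mathcal H^1_r$-type spaces paired with their duals) and that the coefficients $H_p(x,m,Du)$, $mH_{pp}(x,m,Du)$, $H_m(x,m,Du)$ are sufficiently regular, which follows from (A2) and the classical regularity of $(u,m)$ given by \cref{Well posed}. Once the test functions are admissible, the positivity argument above closes the proof.
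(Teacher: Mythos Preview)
Your proof is correct and follows essentially the same route as the paper: both test equation $(i)$ against $\rho$ and equation $(ii)$ against $v$, exploit the cancellations to isolate the quadratic form governed by the matrix in \eqref{H Hessian} plus the nonnegative term $\int_Q f'(m)\rho^2$, and then use $m>0$ (via the maximum principle) together with positive-definiteness to force $(\rho,Dv)\equiv(0,0)$ and finally $v\equiv 0$ from the terminal condition. Your version is a bit more explicit about the term-by-term cancellations and the regularity caveat, but the argument is the same.
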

\begin{proof}
	Multiply by $\rho$ on both sides of (i), integrate on $Q$ and exploit (ii) to get 
	\begin{equation*}
	\begin{aligned}
	&\int_Qf'(m)|\rho|^2dxdt\\
	={}&\int_QH_m(x,m,Du)|\rho|^2dxdt+\int_Qv{\rm{div}}\big(mH_{pp}(x,m,Du)Dv\big)dxdt+\int_Qv{\rm{div}}\big(m\rho H_{pm}(x,m,Du)\rho \big)dxdt\\
	={}&\int_QH_m(x,m,Du)|\rho|^2dxdt-\int_QmH_{pp}(x,m,Du)Dv\cdot Dvdxdt-\int_QmH_{pm}(x,m,Du)\rho Dv.
	\end{aligned}
	\end{equation*}
	It follows from (A1) and parabolic maximum principle (c.f. \cite{tang2022learning}) that $m>0$. Hence with (A3) we obtain $f'(m)\geq 0$ and 
	$$
 \int_Qf'(m)|\rho|^2dxdt\geq 0.
	$$
Hence, from \cref{H Hessian}, we get that
	\begin{equation*}
\begin{pmatrix}
\rho & Dv
\end{pmatrix}
 \begin{pmatrix}
-H_m(x,m,Du) & mH_{pm}(x,m,Du)/2\\[4pt]
mH_{pm}(x,m,Du)/2 & mH_{pp}(x,m,Du)
\end{pmatrix}
\begin{pmatrix}
\rho \\
Dv
\end{pmatrix}
= 0,
\end{equation*}
otherwise we obtain a contradiction. Therefore $(\rho,Dv)\equiv(0,0)$. From $v(x,T)=0$, it also follows  that $v=0$.
\end{proof}
The estimate in the next lemma is similar to \cite[Lemma 5.2]{bc}, with the key differences that we consider non-separable Hamiltonian and local couplings.

\begin{lemma}\label{v rho estimate}
	Assume (A1), (A2) and (A3) and let $(u,m)$ be the classical solution of \cref{MFG}.
	Given $a\in \mathcal{C}^0(Q)$ and a vector field  $b\in \mathcal C^0(Q;\R^d)$, let $(v,\rho)$ be a classical solution of the perturbed linear system
	\be\label{v rho}
	\left\{\begin{aligned}
		(i)\qquad &-\partial_t v -\Delta v+H_p(x,m,Du)Dv+H_m(x,m,Du)\rho=f'(m)\rho+a(x,t) \qquad &&{\rm in}\,\,Q,\\
		(ii)\qquad &\partial_t \rho- \Delta \rho  - {\rm{div}} \big( \rho H_p(x,m,Du)\big) -{\rm{div}} \big( m\rho H_{pm}(x,m,Du)\big) \\
		={}& {\rm{div}}\big(mH_{pp}(x,m,Du)Dv\big)+{\rm{div}}(b(x,t)) &&{\rm in}\,\,Q, \\
		& \rho(x,0)=0 , \; v(x,T)=0  &&{\rm in}\,\,\T^d.
	\end{aligned}\right.
	\ee
	Then, there exists  a constant $C>0$,  depending only on the coefficients of the problem, such that
	\begin{equation}\label{est}
		\|v\|_{\mathcal{C}^{1,0}}+\|\rho\|_{\mathcal{C}^{0}}\leq C\Big(\|a\|_{\mathcal{C}^{0}}+\|b\|_{\mathcal{C}^{0}}\Big).
	\end{equation}
\end{lemma}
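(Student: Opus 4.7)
The plan is to argue by contradiction using compactness, together with the uniqueness result of \cref{stable}. Suppose \eqref{est} fails: then, by linearity and rescaling, there exist sequences $\{(a_n,b_n)\}\subset \mathcal C^0(Q)\times \mathcal C^0(Q;\R^d)$ and corresponding classical solutions $(v_n,\rho_n)$ of \eqref{v rho} with
\[
\|v_n\|_{\mathcal C^{1,0}(Q)}+\|\rho_n\|_{\mathcal C^{0}(Q)}=1,\qquad \|a_n\|_{\mathcal C^0(Q)}+\|b_n\|_{\mathcal C^0(Q)}\to 0.
\]

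The heart of the argument is to promote these uniform bounds to uniform H\"older regularity, so that Arzel\`a--Ascoli supplies strongly convergent subsequences. Since $(u,m)$ is classical, the coefficients $H_p(x,m,Du)$, $H_m(x,m,Du)$, $H_{pm}(x,m,Du)$, $H_{pp}(x,m,Du)$ are H\"older continuous in $(x,t)$. For the HJB equation \eqref{v rho}(i), the source $-H_m\rho_n+f'(m)\rho_n+a_n$ is uniformly bounded in $L^\infty(Q)$; parabolic $L^r$ maximal regularity with $v_n(\cdot,T)=0$ then yields a uniform bound on $v_n$ in $W^{2,1}_r(Q)$ for every finite $r$, and taking $r>d+2$ gives a uniform bound in $\mathcal C^{1+\alpha,(1+\alpha)/2}(Q)$ by embedding. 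For the Fokker--Planck equation \eqref{v rho}(ii), I recast it as $\partial_t\rho_n-\Delta\rho_n=\text{div}\,V_n$ with
\[
V_n=\rho_n H_p(x,m,Du)+m\rho_n H_{pm}(x,m,Du)+mH_{pp}(x,m,Du)Dv_n+b_n
\]
uniformly bounded in $L^\infty(Q;\R^d)$; De Giorgi--Nash--Moser-type H\"older estimates for parabolic equations in divergence form then furnish a uniform $\mathcal C^{\beta,\beta/2}(Q)$ bound on $\rho_n$ for some $\beta\in(0,1)$.

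Along a subsequence, $v_n\to v^*$ in $\mathcal C^{1,0}(Q)$ and $\rho_n\to \rho^*$ in $\mathcal C^0(Q)$. Using $a_n,b_n\to 0$ uniformly and passing to the limit in the distributional form of \eqref{v rho}, the pair $(v^*,\rho^*)$ is a weak solution of the homogeneous linearized system \eqref{linearized}. \cref{stable} then forces $(v^*,\rho^*)=(0,0)$, contradicting $\|v^*\|_{\mathcal C^{1,0}}+\|\rho^*\|_{\mathcal C^0}=1$ obtained from the strong convergence.

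The hardest step will be the uniform H\"older estimate for $\rho_n$: because $b_n$ lies only in $\mathcal C^0$ and enters in divergence form, $L^p$ parabolic regularity is not directly available, and one must appeal to De Giorgi--Nash--Moser theory for divergence-form equations with $L^\infty$ drift and $L^\infty$ source-in-divergence, verifying that the resulting H\"older constant depends only on the fixed data $(u,m)$ and not on $n$. An alternative route uses the monotonicity structure directly: testing \eqref{v rho}(i) against $\rho_n$, combining with \eqref{v rho}(ii) as in the proof of \cref{stable}, and exploiting the positivity of the Hessian in \eqref{H Hessian} first yields an $L^2$ estimate on $(Dv_n,\rho_n)$ controlled by $\|a_n\|_{L^2}+\|b_n\|_{L^2}$, which can then be bootstrapped to the claimed $\mathcal C^{1,0}\times\mathcal C^0$ bound by parabolic regularity applied alternately to \eqref{v rho}(i) and \eqref{v rho}(ii).
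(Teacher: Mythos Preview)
Your main argument---contradiction, rescaling to unit norm, H\"older compactness via parabolic regularity for each equation, and passage to the limit to invoke \cref{stable}---is precisely the paper's proof. The paper normalizes slightly differently (bounding $\|a^k\|+\|b^k\|\le 1$ and dividing by $\theta^k=\|v^k\|_{\mathcal C^{1,0}}+\|\rho^k\|_{\mathcal C^0}\to\infty$), but this is equivalent to your rescaling, and the regularity inputs you cite ($W^{2,1}_r$ for $v$, divergence-form H\"older estimates for $\rho$) are exactly those invoked there.
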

\begin{proof}
	First observe that since the system \eqref{v rho} is linear,
	then $(v,\rho)/(\|a\|_{\mathcal{C}^{0}}+\|b\|_{\mathcal{C}^{0}})$ is the solution of the problem corresponding to the perturbation $(a,b)/(\|a\|_{\mathcal{C}^{0}}+\|b\|_{\mathcal{C}^{0}})$. Hence \eqref{est} is equivalent to show  that,  for $\|a\|_{\mathcal{C}^{0}}+\|b\|_{\mathcal{C}^{0}}\leq 1$,  then $\|v\|_{\mathcal{C}^{1,0}}+\|\rho\|_{\mathcal{C}^{0}}\leq C$ for some $C>0$.\par
	We argue by contradiction and suppose that the estimate is not true. Hence  we assume that there exists $a^k$, $b^k$ and $(v^k,\rho^k)$ with 
	\begin{equation}\label{contradiction}
		\|a^k\|_{\mathcal{C}^{0}}+\|b^k\|_{\mathcal{C}^{0}}\leq 1,\,\,\theta^k:=\|v^k\|_{\mathcal{C}^{1,0}}+\|\rho^k\|_{\mathcal{C}^{0}}\geq k.
	\end{equation}
	Set 
	\begin{equation*}
		\tilde{v}^k:=\frac{v^k}{\theta^k},\,\,\,\tilde{\rho}^k:=\frac{\rho^k}{\theta^k}.
	\end{equation*}
	By definition, $\|\tilde{v}^k\|_{\mathcal{C}^{1,0}}+\|\tilde{\rho}^k\|_{\mathcal{C}^{0}}=1$ for all $k$ and the pair $(\tilde{v}^k,\tilde{\rho}^k)$ solves:
	\be
	\left\{\begin{aligned}
		(i) \qquad &-\partial_t \tilde{v}^k -\Delta \tilde{v}^k+H_p(x,m,Du)D\tilde{v}^k+H_m(x,m,Du)\tilde{\rho}^k=f'(m)\tilde{\rho}^k+\frac{a^k(x,t)}{\theta^k} \qquad &&{\rm in}\,\,Q,\\
		(ii) \qquad &\partial_t \tilde{\rho}^k- \Delta \tilde{\rho}^k  - {\rm{div}} \big( \tilde{\rho}^k H_p(x,m,Du)\big) -{\rm{div}} \big( m\tilde{\rho}^k H_{pm}(x,m,Du)\big) \\
		={}&{\rm{div}}\big(mH_{pp}(x,m,Du)D\tilde{v}^k\big)+{\rm{div}}\big(\frac{b^k(x,t)}{\theta^k}\big) &&{\rm in}\,\,Q, \\
		& \rho(x,0)=0 , \; v(x,T)=0 &&{\rm in}\,\,\T^d.
	\end{aligned}\right.
	\ee
	Observe that $\tilde{v}^k$ is a solution of a linear parabolic equation with bounded coefficients. Hence,   $\tilde{v}^k$ and  $D\tilde{v}^k$ are bounded in $\mathcal C^{\alpha,\alpha/2}$ for some $\alpha \in (0,1)$. Similarly, $\tilde{\rho}^k$, solution of a linear equation in divergence form,  is bounded in $\mathcal C^{\beta,\beta/2}$ for some $\beta \in (0,1)$. 
	By taking subsequences we obtain a cluster point $(v,\rho)$ of $(\tilde{v}^k,\tilde{\rho}^k)$ such that 
	\begin{equation}\label{norm}	 
	\|v\|_{\mathcal{C}^{1,0}}+\|\rho\|_{\mathcal{C}^{0}}=\lim_{k\rightarrow +\infty}(\|\tilde{v}^k\|_{\mathcal{C}^{1,0}}+\|\tilde{\rho}^k\|_{\mathcal{C}^{0}})=1.	 
	\end{equation}
	  By   \cref{contradiction}, we know $ a^k(x,t)/\theta^k$ and ${ \rm{div}} ( b^k(x,t)/\theta^k )$ actually vanish for $k\to \infty$ and therefore  $(v,\rho)$ is a solution of  \eqref{linearized}. Hence by \cref{stable} we have $(v,\rho)=(0,0)$, a contradiction to \cref{norm}. 
\end{proof}

%%%%%%%%%%%%%%%%%%%%%%%%%%%%%
%                           %
%%%%%%%%%%%%%%%%%%%%%%%%%%%%%
\section{The Newton  method for the Mean Field Games system with non-separable Hamiltonian  and local coupling}\label{sec:newton_local}
In this section, we give an estimate for the rate of convergence of the Newton method to the MFG system in the case of a non-separable Hamiltonian and local coupling. We first prove the well posedness of the system \eqref{Newton system} for each $n$.
\begin{proposition}\label{system n}
	For any $n\in\N$, there exists  a unique  solution $(u^{n},m^{n})\in \mathcal C^{2+\alpha,1+\alpha/2}(Q)\times \mathcal C^{2+\alpha,1+\alpha/2}(Q)$  to the system \eqref{Newton system}.
\end{proposition}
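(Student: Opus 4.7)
The plan is to argue by induction on $n$. At the inductive step, the previous iterate $(u^{n-1}, m^{n-1})$ lies in $\mathcal{C}^{2+\alpha,1+\alpha/2}(Q)^2$ with $m^{n-1}$ bounded below by a positive constant, so all coefficients in \eqref{Newton system}, namely $H_p$, $H_m$, $H_{pp}$, $H_{pm}$ evaluated at $(x,m^{n-1},Du^{n-1})$ and $f'(m^{n-1})$, as well as the explicit right-hand sides, belong to $\mathcal{C}^{\alpha,\alpha/2}(Q)$ with bounds depending only on $(u^{n-1},m^{n-1})$. The system \eqref{Newton system} then reduces to a linear, coupled, forward-backward parabolic system for the unknown $(u^n,m^n)$ whose structure closely mirrors the perturbed linearized system \eqref{v rho}.

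For uniqueness, I would consider the difference $(v,\rho)$ of two candidate solutions, which solves the homogeneous counterpart of \eqref{Newton system} with $v(\cdot,T)=0$ and $\rho(\cdot,0)=0$. Testing the HJB equation for $v$ by $\rho$, integrating over $Q$, and using the Fokker-Planck equation for $\rho$ exactly as in the proof of \cref{stable} with $(u,m)$ replaced by $(u^{n-1},m^{n-1})$, the $f$-terms combine into $\int_Q f'(m^{n-1})|\rho|^2\,dx\,dt \geq 0$ by (A3), while the remaining contributions reconstruct the quadratic form associated to the matrix in \eqref{H Hessian} evaluated at $(x,m^{n-1},Du^{n-1})$. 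Assumption (A2), together with $m^{n-1}>0$, then forces $(\rho,Dv)\equiv(0,0)$, and $v(\cdot,T)=0$ yields $v\equiv 0$.

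For existence, I would employ a Leray-Schauder fixed point argument. On a Banach space of the form $\mathcal{X} = \mathcal{C}^{\alpha,\alpha/2}(Q) \times \mathcal{C}^{1+\alpha,(1+\alpha)/2}(Q)$, so that both $\bar m$ and $D\bar u$ are available as data, define a map $T(\bar m,\bar u) = (\tilde m,\tilde u)$ by decoupling \eqref{Newton system}: solve the linear backward HJB equation for $\tilde u$ with terminal datum $u_T$, treating the cross terms $H_m(x,m^{n-1},Du^{n-1})\bar m$ and $f'(m^{n-1})\bar m$ as source, and solve the linear forward divergence-form equation for $\tilde m$ with initial datum $m_0$, treating $\mathrm{div}(m^{n-1}H_{pp}(\cdot)D\bar u)$ and $\mathrm{div}(m^{n-1}H_{pm}(\cdot)\bar m)$ as source. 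Parabolic Schauder theory \cite{LSU} supplies $\tilde u,\tilde m \in \mathcal{C}^{2+\alpha,1+\alpha/2}(Q)$ with norms controlled by $\|(\bar m,\bar u)\|_{\mathcal{X}}$, so $T$ is compact via the compact embedding $\mathcal{C}^{2+\alpha,1+\alpha/2}(Q)\hookrightarrow\mathcal{X}$. Fixed points of $\lambda T$ for $\lambda \in [0,1]$ are controlled by an a priori estimate obtained verbatim from the contradiction and compactness argument of \cref{v rho estimate}, now centered at $(u^{n-1},m^{n-1})$; the Leray-Schauder theorem then produces a fixed point of $T$, which is the desired $(u^n,m^n)$, and the Schauder estimates immediately upgrade the regularity to $\mathcal{C}^{2+\alpha,1+\alpha/2}(Q)^2$.

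The main obstacle is transferring the a priori estimate of \cref{v rho estimate} from the exact MFG solution $(u,m)$ to the current iterate $(u^{n-1},m^{n-1})$. That estimate rests on \cref{stable}, whose positivity-monotonicity argument requires $m^{n-1}>0$ uniformly along the trajectory, a property that does not follow directly from (A1)-(A3) applied to Newton iterates and must be propagated inductively. In the forward induction driving the main convergence theorem one takes $(u^0,m^0)$ close to $(u,m)$, and the quadratic convergence simultaneously ensures that $m^n$ remains close to the strictly positive MFG density $m$ at every step, which closes the induction and legitimizes both the uniqueness and the existence arguments above.
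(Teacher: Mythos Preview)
Your uniqueness argument is essentially the paper's: test the homogeneous difference system, and the Hessian condition \eqref{H Hessian} at $(x,m^{n-1},Du^{n-1})$ together with $f'\ge 0$ kills $(\rho,Dv)$.

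For existence the paper takes a different and lighter route. Instead of a two-variable Leray--Schauder map on $(\bar m,\bar u)$ requiring an a priori bound on fixed points of $\lambda T$, the paper sets up a \emph{single-variable} Schauder map $\varrho\mapsto\hat\varrho$: given $\varrho\in\mathcal C^0(Q)$, first solve the backward linear equation for $\hat u$ in $W^{2,1}_r$ (Proposition~\ref{linear estim Sobolev}) with $\varrho$ appearing only as an $L^\infty$ source, then feed $D\hat u$ into the forward divergence-form equation and obtain $\hat\varrho\in\mathcal H^1_r\hookrightarrow\mathcal C^{\alpha,\alpha/2}$ (Proposition~\ref{m stability}). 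Compactness comes straight from parabolic regularity and the embedding, and continuity from the linear stability estimates; no analogue of Lemma~\ref{v rho estimate} at the iterate is invoked for existence. Once the fixed point is found, regularity is bootstrapped to $\mathcal C^{2+\alpha,1+\alpha/2}$ via Proposition~\ref{linear estim}. Your scheme would also work, but it front-loads the stability estimate (transferred to $(u^{n-1},m^{n-1})$) into the existence step, which the paper avoids; the trade-off is that your Leray--Schauder argument gives a cleaner path to the a priori bound, whereas the paper's Schauder set $\mathbf X$ is handled somewhat informally.

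Your final paragraph about $m^{n-1}>0$ is a genuine point: the Hessian condition \eqref{H Hessian} is only asserted for $m>0$, so both the paper's uniqueness step and your transferred stability lemma need this. The paper uses it without comment; your suggestion to propagate it via closeness to the strictly positive MFG density $m$ in the convergence theorem is the natural fix.
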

\begin{proof}
	Assume to have proved the statement at step $n-1$. Hence, given $(u^{n-1},m^{n-1})\in \mathcal C^{2+\alpha,1+\alpha/2}(Q)\times \mathcal C^{2+\alpha,1+\alpha/2}(Q)$,  \cref{Newton system} is a strongly coupled linear   system for $(u^{n},m^{n})$.\\
	We first prove existence of a weak solution $(u^n,m^n)\in W^{2,1}_r(Q)\times \mathcal{H}^1_r(Q)$, $r>d+2$ by means of  a fixed point argument. \par
	Define $\mathbf{X}:=\{\varrho \in \mathcal C^0(Q): \varrho\ge 0, \varrho(x,0)=m_0(x), \int_{\T^d}\varrho(x,t)dx=1\}$ and consider the compact mapping $\hat{\varrho}=\mathbf{T}(\varrho): \mathcal C^0(Q)\rightarrow \mathcal{C}^{\alpha,\alpha/2}(Q)$ defined by solving in sequence
\be\label{new1}
\left\{\begin{aligned}
	(i) \qquad &-\partial_t \hat{u} -\Delta \hat{u}+H_p(x,m^{n-1},Du^{n-1})D\hat{u}+H_m(x,m^{n-1},Du^{n-1}) (\varrho- m^{n-1})\\
	={}&H_p(x,m^{n-1},Du^{n-1})Du^{n-1}-H(x,m^{n-1},Du^{n-1})+f(m^{n-1})+f'(m^{n-1})(\varrho-m^{n-1}) \qquad &&{\rm in}\,\,Q,\\
  & \hat{u}(x,T)=u_T(x) &&{\rm in}\,\,\T^d,\\
	(ii) \qquad &\partial_t \hat{\varrho}- \Delta \hat{\varrho}  - \text{div} \big( \varrho H_p(x,Du^{n-1})\big)-\text{div} \big(m^{n-1} H_{pm}(x,m^{n-1},Du^{n-1})\varrho\big)\\
	={}& {\rm{div}}\big(m^{n-1}H_{pp}(x,m^{n-1},Du^{n-1})(D\hat{u}-Du^{n-1})\big)-\text{div} \big((m^{n-1})^2H_{pm}(x,m^{n-1},Du^{n-1})\big) &&{\rm in}\,\,Q, \\
	&\hat{\varrho}(x,0)=m_0(x) &&{\rm in}\,\,\T^d.
\end{aligned}\right.
\ee
We rewrite equation (i) in \cref{new1} as
$$
-\partial_t \hat{u} -\Delta \hat{u}+H_p(x,m^{n-1},Du^{n-1})D\hat{u}=\mathsf{f}
$$
with
 \begin{equation}\label{sf f}
	\begin{aligned}
	\mathsf{f}:={}&H_p(x,m^{n-1},Du^{n-1})Du^{n-1}-H(x,m^{n-1},Du^{n-1})+f(m^{n-1})\\
	 &+f'(m^{n-1})(\varrho-m^{n-1})-H_m(x,m^{n-1},Du^{n-1}) (\varrho- m^{n-1}) \in L^\infty(Q).
	\end{aligned}
	\end{equation}
By \cref{linear estim Sobolev},	we obtain    the existence of $\hat{u}\in W^{2,1}_r(Q)$ solving (i) in \eqref{new1}. By Sobolev embedding $D\hat{u} \in \mathcal{C}^{\alpha,\alpha/2}(Q;\R^d)$. Next we rewrite  equation (ii) in \cref{new1} as
	$$
	\partial_t \hat{\varrho}- \Delta \hat{\varrho}  - {\rm div}(\mathsf{F})=0
	$$
with 
\begin{align*}
	\mathsf{F}:={}&\varrho H_p(x,Du^{n-1})+m^{n-1} H_{pm}(x,m^{n-1},Du^{n-1})+m^{n-1}H_{pp}(x,m^{n-1},Du^{n-1})(D\hat{u}-Du^{n-1})\\
	&-(m^{n-1})^2H_{pm}(x,m^{n-1},Du^{n-1}).
\end{align*}
From $D\hat{u} \in \mathcal{C}^{\alpha,\alpha/2}(Q;\R^d)$ and the assumptions on $(u^{n-1},m^{n-1})$, $\mathsf{F}\in L^\infty(Q;\R^d)$ and, by \cref{m stability}, we obtain  there exists a solution $\hat{\varrho}\in \mathcal{H}^1_r(Q)$ to (i) in \eqref{new1}, thus $\hat{\varrho}\in \mathcal{C}^{\alpha,\alpha/2}(Q)$. \\		
	Set $\delta \hat{\varrho}=\hat{\varrho_1}-\hat{\varrho_2}$, $\delta \hat{u}=\hat{u}_1-\hat{u}_2$. Then
	\be
	\left\{\begin{aligned}
		(i)  \qquad &-\partial_t \delta \hat{u} -\Delta \delta \hat{u}+H_p(x,m^{n-1},Du^{n-1})D\delta \hat{u}+m^{n-1} H_{pm}(x,m^{n-1},Du^{n-1})\delta \varrho \\
		={}&f'(m^{n-1})\delta  \varrho \qquad &&{\rm in}\,\,Q,\\
		(ii) \qquad &\partial_t \delta \hat{\varrho}- \Delta \delta \hat{\varrho}  - \text{div} \big( \delta \varrho H_p(x,m^{n-1},Du^{n-1})\big)-\text{div} \big(m^{n-1} H_{pm}(x,m^{n-1},Du^{n-1})\delta \varrho \big)\\
		={}&{\rm{div}}\big(m^{n-1}H_{pp}(x,m^{n-1},Du^{n-1})D\delta \hat{u}\big) &&{\rm in}\,\,Q, \\
		&\delta \hat{\varrho}(x,0)=0, \; \delta \hat{u}(x,T)=0 &&{\rm in}\,\,\T^d. 
	\end{aligned}\right.
	\ee
	We obtain by \cref{linear estim Sobolev} that
	$$
	\|\delta \hat{u}\|_{W^{2,1}_r(Q)}\leq C\|\delta  \varrho\|_{L^\infty(Q)},\,\,\,\|D\delta \hat{u}\|_{L^\infty(Q;\R^d)}\leq C\|\delta  \varrho\|_{L^\infty(Q)},
	$$
	then by \cref{m stability}
	$$
	\|\delta \hat{\varrho}\|_{\mathcal{C}^{\alpha,\alpha/2}}\leq C(\|\delta  \varrho\|_{L^\infty(Q)}+\|D\delta \hat{u}\|_{L^\infty(Q;\R^d)})\leq C\|\delta  \varrho\|_{L^\infty(Q)}.
	$$
	It then follows that $\mathbf{T}$ is a continuous map. We conclude, by Schauder fixed point theorem, the existence of a solution to \eqref{Newton system}. 
	It follows that $(\hat{u},\hat{\varrho})\in W^{2,1}_r(Q)\times \mathcal{C}^{\alpha,\alpha/2}(Q)$ is a fixed point defined by \cref{new1},  with $\mathsf{f}$ replaced by   
\begin{equation*}
 \begin{aligned}
\hat{\mathsf{f}}:={}&H_p(x,m^{n-1},Du^{n-1})Du^{n-1}-H(x,m^{n-1},Du^{n-1})+f(m^{n-1})\\
	 &+f'(m^{n-1})(\hat{\varrho}-m^{n-1})-H_m(x,m^{n-1},Du^{n-1}) (\hat{\varrho}- m^{n-1}).
 \end{aligned}
\end{equation*}
 Since $\hat{\mathsf{f}}\in \mathcal{C}^{\alpha,\alpha/2}(Q)$, from \cref{linear estim} it follows that $\hat{u}\in \mathcal C^{2+\alpha,1+\alpha/2}(Q)$.
	By assumption (A2), $\hat{u}\in \mathcal C^{2+\alpha,1+\alpha/2}(Q)$ and $(u^{n-1},m^{n-1})\in \mathcal C^{2+\alpha,1+\alpha/2}(Q)\times \mathcal C^{2+\alpha,1+\alpha/2}(Q)$, we obtain ${\rm div}(\mathsf{F})\in \mathcal{C}^{\alpha,\alpha/2}(Q)$. Using \cref{linear estim} again we obtain 
	$\hat{\rho}\in \mathcal C^{2+\alpha,1+\alpha/2}(Q)$.\par
  	We now prove uniqueness. Assume that there are two solutions  $(\hat u_1,\hat\rho_1)$ and   $(\hat u_2,\hat\rho_2)$ to \cref{new1} and set $\delta \hat{u}=\hat{u}_1-\hat{u}_2$, $\delta \hat{\varrho}=\hat{\varrho_1}-\hat{\varrho_2}$.  Clearly $(\delta \hat u,\delta \hat \rho)$ solves 
	\be\label{linear}
	\left\{\begin{aligned}
		(i)  \qquad &-\partial_t \delta \hat{u} -\Delta \delta \hat{u}+H_p(x,m^{n-1},Du^{n-1})D\delta \hat{u}+m^{n-1} H_{m}(x,m^{n-1},Du^{n-1})\delta \hat \rho \\
		={}&f'(m^{n-1})\delta \hat \rho \qquad&&{\rm in}\,\,Q,\\
		(ii) \qquad &\partial_t \delta \hat{\varrho}- \Delta \delta \hat{\varrho}  - \text{div} \big( \delta \hat \rho H_p(x,m^{n-1},Du^{n-1})\big)-\text{div} \big(m^{n-1} H_{pm}(x,m^{n-1},Du^{n-1})\delta \hat \rho \big)\\
		={}&{\rm{div}}\big(m^{n-1}H_{pp}(x,m^{n-1},Du^{n-1})D\delta \hat{u}\big) &&{\rm in}\,\,Q, \\
		&\delta \hat{\varrho}(x,0)=0, \; \delta \hat{u}(x,T)=0 &&{\rm in}\,\,\T^d.
	\end{aligned}\right.
	\ee
	Testing the equation (i) with $\hat \rho$, equation (ii)  with $\hat u$ and subtracting the resulting identities,   an easy computation  gives
	\begin{equation*}
	\begin{aligned}
	&\int_Qf'(m^{n-1})|\delta \hat{\varrho}|^2dxdt\\
	={}&\int_QH_m(x,m^{n-1},Du^{n-1})|\delta \hat{\varrho}|^2dxdt-\int_Qm^{n-1}H_{pp}(x,m^{n-1},Du^{n-1})D\delta \hat{u}\cdot D\delta \hat{u}dxdt\\
	&-\int_Qm^{n-1}H_{pm}(x,m^{n-1},Du^{n-1})\delta \hat{\varrho} D\delta \hat{u}\leq 0.
	\end{aligned}
	\end{equation*}
	By (A2), \cref{H Hessian}, and (A3) we get $(\delta \hat{\varrho},D\delta \hat{u})=(0,0)$. From $\delta \hat{u}(x,T)=0$, it also follows  that $\delta \hat{u}=0$.	
\end{proof} 
\cref{system n} is concerned with the invertibility of the linear operator $\mathcal{LF}$ at each step $n$, as defined in \cref{LF}. Hence, it is not surprising that one needs some Hessian type condition. The constant $C$ in \cref{system n} may depend  on the previous step $(u^{n-1},m^{n-1})$. In the discretized setting, invertibility of a    linearized system similar to \cref{linear} has been discussed in \cite[Section 4.1]{acdplanning} for solving a mean field planning problem with separable Hamiltonian. We believe the ideas in \cite[Section 4.1]{acdplanning} can be extended also to MFGs with non-separable Hamiltonian. However, solvability at each iteration is not enough for the convergence of the  Newton  method since it is necessary to prove some a priori estimates independent of iteration index $n$.\par
In the next result, we  obtain the local quadratic rate of convergence result.
%%%%%%%%%%%
\begin{theorem}\label{Main Thm local}
Let $(u,m)$ be  the solution of system \eqref{MFG} and $(u^n,m^n)$ is the sequence generated by Newton's algorithm \eqref{Newton system}. Set $v^n=u^n-u$, $\rho^n=m^n-m$. There exists a constant $\eta>0$ such that if $\|v^0\|_{\mathcal{C}^{1,0}}+\|\rho^0\|_{\mathcal{C}^{0}}\leq \eta$ then $\|v^n\|_{\mathcal{C}^{1,0}}+\|\rho^n\|_{\mathcal{C}^{0}}\rightarrow 0$ with  a quadratic rate of convergence.
\end{theorem}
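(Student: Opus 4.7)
Setting $\eta_n := \|v^n\|_{\mathcal C^{1,0}} + \|\rho^n\|_{\mathcal C^0}$, the plan is to show by Taylor expansion that $(v^n,\rho^n)$ solves the linearised system \cref{v rho} at $(u,m)$ with zero boundary data and sources $(a^n,\mathrm{div}\,b^n)$ satisfying $\|a^n\|_{\mathcal C^0}+\|b^n\|_{\mathcal C^0}\le C_1\eta_{n-1}^2+C_2\eta_{n-1}\eta_n$. \cref{v rho estimate} will then give $\eta_n\le C(\|a^n\|_{\mathcal C^0}+\|b^n\|_{\mathcal C^0})$; absorbing the $\eta_n$ term when $\eta_{n-1}$ is small yields the quadratic recursion $\eta_n\le K\eta_{n-1}^2$, from which the claim follows by induction once $\eta_0$ is chosen below $1/K$.

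To derive the sources, rewrite the Newton step abstractly as $\mathcal{LF}_{(u^{n-1},m^{n-1})}(u^n-u^{n-1},m^n-m^{n-1})=-\mathcal{F}(u^{n-1},m^{n-1})$ and use $\mathcal{F}(u,m)=0$ together with the second-order Taylor expansion $\mathcal{F}(u^{n-1},m^{n-1})=\mathcal{LF}_{(u,m)}(v^{n-1},\rho^{n-1})+Q^n$, where $Q^n$ is quadratic in $(v^{n-1},\rho^{n-1})$. Subtracting and rearranging shows that $\mathcal{LF}_{(u,m)}(v^n,\rho^n)$ equals $-Q^n$ plus the operator discrepancy $\mathcal{LF}_{(u^{n-1},m^{n-1})}-\mathcal{LF}_{(u,m)}$ evaluated at $(v^{n-1},\rho^{n-1})$, minus the same discrepancy evaluated at $(v^n,\rho^n)$. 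In PDE form, $Q^n$ collects second-order Taylor remainders of $H(x,m,Du)$ and $f(m)$ in the Hamilton--Jacobi equation, and, inside the Fokker--Planck divergence, of $H_p,H_{pp},H_{pm}$, all expanded about $(m^{n-1},Du^{n-1})$ using $m-m^{n-1}=-\rho^{n-1}$ and $Du-Du^{n-1}=-Dv^{n-1}$; the operator discrepancies encode the replacement of the coefficients $H_p,H_m,H_{pp},H_{pm},f'$ and the weight $m$ by their values at $(m^{n-1},Du^{n-1})$.

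Each remainder in $Q^n$ is bounded in $\mathcal C^0$ by $C\eta_{n-1}^2$ using the uniform estimates on $|H_{pp}|,|H_{pm}|,|H_{mm}|,|H_{ppp}|,|H_{ppm}|$ and $|f''|$ from (A2)-(A3), applied on the compact set swept by $(m,Du)$ and $(m^{n-1},Du^{n-1})$ (compact because $(u,m)$ is a fixed classical solution and, for $\eta_{n-1}$ small, $(u^{n-1},m^{n-1})$ lies in an $\eta$-neighbourhood). The coefficient differences entering $\mathcal{LF}_{(u^{n-1},m^{n-1})}-\mathcal{LF}_{(u,m)}$ are $O(\eta_{n-1})$ by the mean value theorem and the same bounds; paired with $(v^{n-1},\rho^{n-1})$ of norm $\eta_{n-1}$ and with $(v^n,\rho^n)$ of norm $\eta_n$, they contribute $O(\eta_{n-1}^2)$ and $O(\eta_{n-1}\eta_n)$ respectively. \cref{v rho estimate} then yields $\eta_n\le CC_1\eta_{n-1}^2+CC_2\eta_{n-1}\eta_n$, and choosing $\eta$ so that $CC_2\eta\le 1/2$ and $2CC_1\eta<1$ closes the induction with $\eta_n\le 2CC_1\eta_{n-1}^2$ and guarantees $\eta_n\le\eta$ is preserved, delivering the doubly-exponential rate $\eta_n\le K^{-1}(K\eta_0)^{2^n}$.

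The main obstacle is the PDE-level verification on the Kolmogorov equation: because Newton linearises the drifts as $m^nH_p(m^{n-1},Du^{n-1})$, $m^{n-1}H_{pp}(m^{n-1},Du^{n-1})(Du^n-Du^{n-1})$ and $m^{n-1}H_{pm}(m^{n-1},Du^{n-1})(m^n-m^{n-1})$ rather than at $(m,Du)$, one must decompose each carefully into the linear pieces $\rho^nH_p(m,Du)$, $mH_{pp}(m,Du)Dv^n$ and $m\rho^nH_{pm}(m,Du)$ that sit on the left side of \cref{v rho}(ii), plus a divergence residue writable as $\mathrm{div}\,b^n$ with the desired $\eta_{n-1}^2+\eta_{n-1}\eta_n$ bound. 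The key algebraic point, and the essence of Newton's quadratic rate, is that the first-order-in-$(\rho^{n-1},Dv^{n-1})$ contributions cancel exactly against the mismatch between $\mathcal{LF}_{(u^{n-1},m^{n-1})}$ and $\mathcal{LF}_{(u,m)}$, leaving only genuinely quadratic perturbations; checking this cancellation term by term, and confirming that each surviving perturbation admits an $L^\infty$ potential for the divergence as required by \cref{v rho estimate}, is the technical heart of the argument.
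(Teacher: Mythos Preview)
Your proposal is correct and follows essentially the same strategy as the paper: write $(v^n,\rho^n)$ as a solution of the perturbed linearised system \cref{v rho}, estimate the sources $a$ and $b$ by Taylor expansion, apply \cref{v rho estimate}, absorb the $\eta_{n-1}\eta_n$ term, and induct. The one noteworthy technical difference is that you obtain clean $O(\eta_{n-1}^2)+O(\eta_{n-1}\eta_n)$ bounds by restricting a priori to a compact neighbourhood of $(m,Du)$, whereas the paper uses the global growth bounds on $H_{mm},H_{mp}$ etc.\ from (A2) directly, which produces additional higher-order terms $\eta_{n-1}^3,\eta_{n-1}^4,\eta_{n-1}^6$ in the intermediate estimate before the smallness assumption is invoked; the paper itself observes in \cref{rem_classical} that your compactness shortcut is legitimate and in fact allows one to drop the uniform derivative bounds in \cref{H}.
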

\begin{proof}  
We emphasize that from here and for the rest of the proof, $C$ denotes some generic constant which may increase from line to line.  This constant may depend on data of the problem and the solution $(u,m)$, but it is always independent of $n$.\par
We observe that $v^n$ solves the equation
\begin{equation*}
\begin{aligned}
&-\partial_t v^n -\Delta v^n+H_p(x,m,Du)\cdot Dv^n\\
={}&f(m^{n-1})-f(m)+f'(m^{n-1})(m^n-m^{n-1})+H(x,m,Du)-H(x,m^{n-1},Du^{n-1})\\
&-H_p(x,m^{n-1},Du^{n-1})D(u^{n}- u^{n-1})-H_m(x,m^{n-1},Du^{n-1}) (m^{n}- m^{n-1}),
\end{aligned}
\end{equation*}
which can be rewritten as
$$
-\partial_t v^n -\Delta v^n+H_p(x,m,Du)\cdot Dv^n+H_m(x,m,Du)\rho^n=f'(m)\rho^n+a,
$$
where
\begin{equation}\label{stima_a}
\begin{aligned}
a:={}& H_p(x,m,Du)(Du^n-Du)+H_m(x,m,Du)(m^n-m)+H(x,m,Du)-H(x,m^{n-1},Du^{n-1})\\
&-H_p(x,m^{n-1},Du^{n-1})D(u^{n}- u^{n-1})-H_m(x,m^{n-1},Du^{n-1}) (m^{n}- m^{n-1})\\
&-f'(m)(m^n-m)+f(m^{n-1})-f(m)+f'(m^{n-1})(m^n-m^{n-1}).
\end{aligned}
\end{equation}
In order to apply     \cref{v rho estimate}, we need to estimate $\|a\|_{\mathcal{C}^0}$. We rewrite the terms involving $H$ in \cref{stima_a} as
\begin{equation}\label{stima0}
\begin{aligned}
&H_p(x,m,Du)(Du^n-Du)+H_m(x,m,Du)(m^n-m)+H(x,m,Du)-H(x,m^{n-1},Du^{n-1})\\
&-H_p(x,m^{n-1},Du^{n-1})D(u^{n}- u^{n-1})-H_m(x,m^{n-1},Du^{n-1}) (m^{n}- m^{n-1})\\
={}&H(x,m,Du)-H(x,m^{n-1},Du^{n-1})-H_p(x,m^{n-1},Du^{n-1})D(u- u^{n-1})\\
&-H_m(x,m^{n-1},Du^{n-1}) (m- m^{n-1})+\Big(H_p(x,m,Du)-H_p(x,m^{n-1},Du^{n-1})\Big)(Du^n-Du)\\
&+\Big(H_m(x,m,Du)-H_m(x,m^{n-1},Du^{n-1})\Big) (m^n-m).
\end{aligned}
\end{equation}
We now estimate the terms on the right hand side of the previous identity. It is clear from (A2) that for any $\tau \in (0,1)$,
\begin{equation}\label{Hmm}
\begin{aligned}
 \sup_\tau& \big\vert H_{mm}(x,m+\tau(m^{n-1}-m),Du+\tau(Du^{n-1}-Du))\big\vert\\
& \le C|Du+\tau(Du^{n-1}-Du)|^2 \leq C\big(\vert Du\vert+\vert Du^{n-1}-Du\vert \big)^2\\
& \le C\big(2\vert Du\vert^2+2\vert Du^{n-1}-Du\vert^2 \big).
\end{aligned}
\end{equation}
Likewise, 
\begin{equation}\label{Hmp}
\begin{aligned}
\sup_\tau \big\vert H_{mp}(x,m+\tau(m^{n-1}-m),Du+\tau(Du^{n-1}-Du))\big \vert  &\leq C|Du+\tau(Du^{n-1}-Du)|\\
&\leq   C\big(\vert Du\vert+\vert Du^{n-1}-Du\vert \big),
\end{aligned}
\end{equation}
and
\begin{equation*}
\sup_\tau \vert H_{pp}(x,m+\tau(m^{n-1}-m),Du+\tau(Du^{n-1}-Du))\vert \leq \bar C.
\end{equation*}
Moreover, by the mean value theorem, we have
\begin{equation}\label{stima1}
\begin{aligned}
&H(x,m,Du)-H(x,m^{n-1},Du^{n-1})-H_p(x,m^{n-1},Du^{n-1})D(u- u^{n-1})\\
&-H_m(x,m^{n-1},Du^{n-1}) (m- m^{n-1})\\
={}&\int_0^1\Big(H_p\big(x,m^{n-1}+\tau(m-m^{n-1}),Du^{n-1}+\tau(Du-Du^{n-1})\big)-H_p(x,m^{n-1},Du^{n-1})\Big)(Du-Du^{n-1})d\tau\\
&+\int_0^1\Big(H_m\big(x,m^{n-1}+\tau(m-m^{n-1}),Du^{n-1}+\tau(Du-Du^{n-1})\big)-H_p(x,m^{n-1},Du^{n-1})\Big)(m-m^{n-1})d\tau.
\end{aligned}
\end{equation}
By using mean value theorem again, we estimate the integrand in \cref{stima1}  by
\begin{equation*}
\begin{aligned}
&H_p\big(x,m^{n-1}+\tau(m-m^{n-1}),Du^{n-1}+\tau(Du-Du^{n-1})\big)-H_p(x,m^{n-1},Du^{n-1})\\
= {}& \int_0^1 \Big(H_{pp}\big(x,m^{n-1}+(1-\tau')\tau(m-m^{n-1}),Du^{n-1}+(1-\tau')\tau(Du-Du^{n-1})\big)\tau(Du^{n-1}-Du)d\tau' \\
&+\int_0^1 \Big(H_{pm}\big(x,m^{n-1}+(1-\tau')\tau(m-m^{n-1}),Du^{n-1}+(1-\tau')\tau(Du-Du^{n-1})\big)\\
&-H_p(x,m^{n-1},Du^{n-1})\Big)\tau(m^{n-1}-m)d\tau 
\end{aligned}
\end{equation*}
and therefore, as $0<(1-\tau')\tau<1$, $0<\tau<1$, we get
\begin{equation*}
\begin{aligned}
&\sup_\tau \vert H_p\big(x,m^{n-1}+\tau(m-m^{n-1}),Du^{n-1}+\tau(Du-Du^{n-1})\big)-H_p(x,m^{n-1},Du^{n-1})\vert\\
\leq {}& \sup_\tau \vert H_{pp}\big(x,m^{n-1}+\tau(m-m^{n-1}),Du^{n-1}+\tau(Du-Du^{n-1})\vert \vert Du-Du^{n-1}\vert \\
&+\sup_\tau \vert H_{pm}\big(x,m^{n-1}+\tau(m-m^{n-1}),Du^{n-1}+\tau(Du-Du^{n-1})\vert \vert m-m^{n-1}\vert\\
\leq {}&\bar C\vert Du-Du^{n-1}\vert +C(\vert Du\vert+\vert Du-Du^{n-1}\vert) \vert m-m^{n-1}\vert,
\end{aligned}
\end{equation*}
\begin{equation*}
\begin{aligned}
&\left\vert \int_0^1\Big(H_p\big(x,m^{n-1}+\tau(m-m^{n-1}),Du^{n-1}+\tau(Du-Du^{n-1})\big)-H_p(x,m^{n-1},Du^{n-1})\Big)(Du-Du^{n-1})d\tau \right\vert\\
\leq {}& (\int_0^1\tau d\tau)\Big(\bar C\vert Du-Du^{n-1}\vert +C(\vert Du\vert+\vert Du-Du^{n-1}\vert) \vert m-m^{n-1}\vert\Big)\vert Du-Du^{n-1}\vert.
\end{aligned}
\end{equation*}
In a similar way, we obtain
\begin{equation*}
\begin{aligned}
&\left\vert \int_0^1\Big(H_m\big(x,m^{n-1}+\tau(m-m^{n-1}),Du^{n-1}+\tau(Du-Du^{n-1})\big)-H_m(x,m^{n-1},Du^{n-1})\Big)(m-m^{n-1})d\tau \right\vert \\
\leq {}& (\int_0^1\tau d\tau)\Big(C(1+\vert Du-Du^{n-1}\vert^2 )\vert m-m^{n-1}\vert+C(1+\vert Du-Du^{n-1}\vert)\vert Du-Du^{n-1}\vert \vert\Big)\vert m-m^{n-1}\vert.
\end{aligned}
\end{equation*}
Therefore, replacing in \cref{stima1}, we have
\begin{equation}\label{stima2}
\begin{aligned}
&\left\vert H(x,m,Du)-H(x,m^{n-1},Du^{n-1})-H_p(x,m^{n-1},Du^{n-1})D(u- u^{n-1})-H_m(x,m^{n-1},Du^{n-1}) (m- m^{n-1})\right\vert\\
&\leq C(\vert Dv^{n-1}\vert^2)+C(1+\vert Dv^{n-1}\vert)\vert Dv^{n-1}\vert \vert \rho^{n-1}\vert+C(1+\vert Dv^{n-1}\vert^2)\vert \rho^{n-1}\vert^2\\
&\leq C\Big(\vert Dv^{n-1}\vert^2+\vert \rho^{n-1}\vert^2+\vert Dv^{n-1}\vert^4+\vert \rho^{n-1}\vert^4\Big).
\end{aligned}
\end{equation}
For the other terms in \cref{stima0}, we first observe that by (A2) and \cref{Hmp} we get
\begin{equation*}
\begin{aligned}
\big \vert& H_p(x,m,Du)-H_p(x,m^{n-1},Du^{n-1})\big \vert \leq {}  \sup_\tau \vert H_{pm}(x,m+\tau(m^{n-1}-m),Du+\tau(Du^{n-1}-Du))  \vert \vert \rho^{n-1}\vert\\
 &+\sup_\tau \vert H_{pp}(x,m+\tau(m^{n-1}-m),Du+\tau(Du^{n-1}-Du))  \vert \vert v^{n-1}\vert\\
 &\leq C(\vert Du\vert+\vert Dv^{n-1}\vert)\vert \rho^{n-1}\vert+C\vert Dv^{n-1}\vert ,
\end{aligned}
\end{equation*}
and,  by \cref{Hmm} and \cref{Hmp},
\begin{equation*}
\begin{aligned}
\big \vert& H_m(x,m,Du)-H_m(x,m^{n-1},Du^{n-1})\big \vert \leq   \sup_\tau \vert H_{pm}\big(x,m+\tau(m^{n-1}-m),Du+\tau(Du^{n-1}-Du)\big)  \vert \vert v^{n-1}\vert\\
 &+\sup_\tau \vert H_{mm}\big(x,m+\tau(m^{n-1}-m),Du+\tau(Du^{n-1}-Du)\big)  \vert \vert \rho^{n-1}\vert\\
 &\leq C(\vert Du\vert+\vert Dv^{n-1}\vert)\vert Dv^{n-1}\vert+C(\vert Du\vert^2+\vert Dv^{n-1}\vert^2)\vert \rho^{n-1}\vert.
\end{aligned}
\end{equation*}
We   then obtain
\begin{equation}\label{stima3}
\begin{aligned}
&\left\vert \Big(H_p(x,m,Du)-H_p(x,m^{n-1},Du^{n-1})\Big)(Du^n-Du)+\Big(H_m(x,m,Du)-H_m(x,m^{n-1},Du^{n-1})\Big) (m^n-m)\right\vert \\
  &\leq\vert H_p(x,m,Du)-H_p(x,m^{n-1},Du^{n-1})\vert \vert Du^n-Du\vert+\vert H_m(x,m,Du)-H_m(x,m^{n-1},Du^{n-1})\vert \vert m^n-m\vert\\
  &\leq \Big(C(1+\vert Dv^{n-1}\vert)\vert \rho^{n-1}\vert +C\vert Dv^{n-1}\vert \Big)\vert Dv^n\vert
+\Big(C(1+\vert Dv^{n-1}\vert^2)\vert \rho^{n-1}\vert \vert +C(1+\vert Dv^{n-1}\vert)\vert Dv^{n-1}\vert \Big)\vert \rho^n\vert\\
 &\leq C\Big(\vert Dv^{n-1}\vert +\vert Dv^{n-1}\vert^2+\vert Dv^{n-1}\vert \vert \rho^{n-1}\vert +\vert Dv^{n-1}\vert^2 \vert \rho^{n-1}\vert +\vert \rho^{n-1}\vert \Big)(\vert Dv^n\vert+\vert \rho^n\vert)\\
 &\leq C\Big(\vert Dv^{n-1}\vert +\vert Dv^{n-1}\vert^2+\vert Dv^{n-1}\vert^4+\vert \rho^{n-1}\vert+\vert \rho^{n-1}\vert^2\Big)\big(\vert Dv^n\vert+\vert \rho^n\vert \big).
\end{aligned}
\end{equation}
To estimate the terms containing $f$ in \cref{stima_a}, we observe that
\begin{equation*}
\begin{aligned}
&-f'(m)(m^n-m)+f(m^{n-1})-f(m)+f'(m^{n-1})(m^n-m^{n-1})\\
={}&f(m^{n-1})-f(m)-f'(m)(m^{n-1}-m)+ (f'(m^{n-1})-f'(m))(m^n-m^{n-1}).
\end{aligned}
\end{equation*}

Exploiting that $f'(\cdot)$ is globally Lipschitz, see (A3), we have
\begin{equation}\label{uniform bound f''}
\vert f(m^{n-1})-f(m)-f'(m)(m^{n-1}-m)\vert \leq C|m^{n-1}-m|^2,
\end{equation} 
and
\begin{equation}\label{est_f'}
	\begin{aligned}
&\left\vert \Big(f'(m^{n-1})-f'(m)\Big)(m^n-m^{n-1})\right\vert \\
&\le \vert f'(m^{n-1})-f'(m)\vert \vert m^n-m\vert+\vert f'(m^{n-1})-f'(m)\vert \vert m-m^{n-1}\vert \\
%|\Big(f'(m^{n-1})-f'(m)\Big)(m^n-m^{n-1})|\leq
&\le C\Big(|\rho^n||\rho^{n-1}|+|\rho^{n-1}|^2\Big).
\end{aligned}  
\end{equation}

Finally, by \cref{stima2}, \cref{stima3}, \cref{uniform bound f''} and \cref{est_f'}, we get 
\begin{equation}\label{a bound}
\begin{aligned}
\|a\|_{\mathcal{C}^{0}} \leq {}&C\Big(\|v^{n-1}\|_{\mathcal{C}^{1,0}}^2+\| \rho^{n-1}\|_{\mathcal{C}^{0}}^2+\|v^{n-1}\|_{\mathcal{C}^{1,0}}^4+\| \rho^{n-1}\|_{\mathcal{C}^{0}}^4\\
&+\big(\|v^{n-1}\|_{\mathcal{C}^{1,0}} +\|v^{n-1}\|_{\mathcal{C}^{1,0}}^2+\|v^{n-1}\|_{\mathcal{C}^{1,0}}^4+\|\rho^{n-1}\|_{\mathcal{C}^{0}}+\|\rho^{n-1}\|_{\mathcal{C}^{0}}^2\big)\big(\|v^{n}\|_{\mathcal{C}^{1,0}}+ \|\rho^{n}\|_{\mathcal{C}^{0}}\big)\Big).
\end{aligned}
\end{equation}
Now we consider the equation satisfied by $\rho^n$. We have 
$$
\partial_t \rho^n -\Delta \rho^n-{\rm{div}}(\rho^nH_p(x,m,Du))-{\rm{div}}(m\rho^nH_{pm}(x,m,Du))=
{\rm{div}}\big(mH_{pp}(x,m,Du)Dv^n\big)+{\rm{div}}(b),
$$
with
\begin{equation*}
\begin{aligned}
b:={}&-\rho^nH_p(x,m,Du)-m\rho^nH_{pm}(x,m,Du)-mH_{pp}(x,m,Du)Dv^n\\
&+ m^{n-1}H_p(x,m^{n-1},Du^{n-1}) -mH_p(x,m,Du)             \\
&+ (m^{n}- m^{n-1}) H_p(x,m^{n-1},Du^{n-1})+m^{n-1} H_{pm}(x,m^{n-1},Du^{n-1})(m^{n}- m^{n-1}) \\
&+m^{n-1}H_{pp}(x,m^{n-1},Du^{n-1})(Du^n-Du^{n-1}).
\end{aligned}
\end{equation*}
To estimate $\|b\|_{\mathcal{C}^0}$, we  start observing that 
\begin{equation}\label{stima_b}
\begin{aligned}
b={}&m^{n-1}H_p(x,m^{n-1},Du^{n-1}) -mH_p(x,m,Du)-m^{n-1}H_{pp}(x,m^{n-1},Du^{n-1})(Du^{n-1}-Du)\\
&-(m^{n-1}-m) H_p(x,m^{n-1},Du^{n-1})-m^{n-1} H_{pm}(x,m^{n-1},Du^{n-1})(m^{n-1}-m) \\
&+\rho^n\Big(H_p(x,m^{n-1},Du^{n-1})-H_p(x,m,Du)\Big)+\rho^n\Big(m^{n-1} H_{pm}(x,m^{n-1},Du^{n-1})-mH_{pm}(x,m,Du)\Big)\\
&+Dv^n\Big(m^{n-1}H_{pp}(x,m^{n-1},Du^{n-1})-mH_{pp}(x,m,Du)\Big).
\end{aligned}
\end{equation}
Denoting $\Phi(x,m,p)=mH_p(x,m,p)$, then by elementary calculation
\begin{align*}
&\partial_{p}\Phi(x,m,p)=mH_{pp}(x,m,p),\,\partial_{pp}\Phi(x,m,p)=mH_{ppp}(x,m,p),\\
&\partial_{m}\Phi(x,m,p)=H_p(x,m,p)+mH_{pm}(x,m,p),\,\partial_{mm}\Phi(x,m,p)=2H_{pm}(x,m,p)+mH_{mmp}(x,m,p).
\end{align*}
It is clear from (A2) that 
\begin{equation*}
\begin{aligned}
\sup_\tau \left\vert \big(m+\tau(m^{n-1}-m)\big)H_{ppp}(x,m+\tau(m^{n-1}-m),Du+\tau(Du^{n-1}-Du))\right\vert \leq C\big(\vert m\vert+|\rho^{n-1}|\big).
\end{aligned}
\end{equation*}
From \cref{Hmp} and 
\begin{equation}\label{Hmmp}
\sup_\tau \big\vert H_{mmp}(x,m+\tau(m^{n-1}-m),Du+\tau(Du^{n-1}-Du))\big\vert \leq C\big(\vert Du\vert+\vert Du^{n-1}-Du\vert \big),
\end{equation}
we  obtain
\begin{equation*}
\begin{aligned}
&\sup_\tau \left\vert 2H_{pm}(x,m+\tau(m^{n-1}-m),Du+\tau(Du^{n-1}-Du))\right\vert \\
&+\sup_\tau \left\vert  \big(m+\tau(m^{n-1}-m)\big)H_{pmm}(x,m+\tau(m^{n-1}-m),Du+\tau(Du^{n-1}-Du))\right\vert\\
\leq {}& C\big(1+\vert Dv^{n-1}\vert+\vert \rho^{n-1}\vert+\vert Dv^{n-1}\vert \vert \rho^{n-1}\vert\big).
\end{aligned}
\end{equation*}
In addition, we have
\begin{equation*}
\begin{aligned}
&\sup_\tau \Big\vert H_{pp}\big(x,m+\tau(m^{n-1}-m),Du+\tau(Du^{n-1}-Du)\big)\\
&+\big(m+\tau(m^{n-1}-m)\big)H_{ppm}\big(x,m+\tau(m^{n-1}-m),Du+\tau(Du^{n-1}-Du)\big)\Big\vert\\
\leq {}& C(1+\vert \rho^{n-1}\vert).
\end{aligned}
\end{equation*}
Collecting these estimates we   obtain
\begin{equation}\label{stima5}
	\begin{aligned}
&\Big\vert m^{n-1}H_p(x,m^{n-1},Du^{n-1}) -mH_p(x,m,Du)-m^{n-1}H_{pp}(x,m^{n-1},Du^{n-1})(Du^{n-1}-Du)\\
&-(m^{n-1}-m) H_p(x,m^{n-1},Du^{n-1})-m^{n-1} H_{pm}(x,m^{n-1},Du^{n-1})(m^{n-1}-m) \Big\vert \\
  &\leq C\big(1+|\rho^{n-1}|\big)\vert Dv^{n-1}\vert^2+C\big(1+\vert v^{n-1}\vert+\vert \rho^{n-1}\vert+\vert Dv^{n-1}\vert \vert \rho^{n-1}\vert\big)\vert \rho^{n-1}\vert^2+C(1+\vert \rho^{n-1}\vert)\vert Dv^{n-1}\vert \vert \rho^{n-1}\vert\\
  &\leq C\big(\vert Dv^{n-1}\vert^2+\vert \rho^{n-1}\vert^2+\vert \rho^{n-1}\vert^3+\vert Dv^{n-1}\vert \vert \rho^{n-1}\vert+\vert Dv^{n-1}\vert^2 \vert \rho^{n-1}\vert+\vert Dv^{n-1}\vert \vert \rho^{n-1}\vert^2+\vert Dv^{n-1}\vert \vert \rho^{n-1}\vert^3\big)\\
  &\leq C\big(\vert Dv^{n-1}\vert^2+\vert \rho^{n-1}\vert^2+\vert Dv^{n-1}\vert^4+\vert \rho^{n-1}\vert^4+\vert \rho^{n-1}\vert^3+\vert \rho^{n-1}\vert^6\big).
\end{aligned}
\end{equation}
Moreover,   from \cref{H}, it follows that
\begin{equation}\label{stima6}
\left\vert H_p(x,m^{n-1},Du^{n-1})-H_p(x,m,Du)\right\vert \leq C\left(\vert Dv^{n-1}\vert+(1+\vert Dv^{n-1}\vert)\vert \rho^{n-1}\vert\right),
\end{equation}
and, from \cref{H} and \cref{Hmmp},
\begin{equation}\label{stima7}
\begin{aligned}
&\left\vert m^{n-1} H_{pm}(x,m^{n-1},Du^{n-1})-mH_{pm}(x,m,Du)\right\vert \\
\leq {}&C(1+\vert \rho^{n-1}\vert)\vert Dv^{n-1}\vert+C\Big(1+\vert v^{n-1}\vert+(1+\vert v^{n-1}\vert )(1+\vert \rho^{n-1}\vert)\Big)\vert \rho^{n-1}\vert\\
\leq {}& C\big(\vert Dv^{n-1}\vert+\vert \rho^{n-1}\vert+\vert Dv^{n-1}\vert^2+\vert \rho^{n-1}\vert^2+\vert \rho^{n-1}\vert^4\big),
\end{aligned}
\end{equation}
\begin{equation}\label{stima8}
\begin{aligned}
\left\vert m^{n-1}H_{pp}(x,m^{n-1},Du^{n-1})-mH_{pp}(x,m,Du)\right\vert \leq C(1+\vert \rho^{n-1}\vert)\vert Dv^{n-1}\vert+C\vert \rho^{n-1}\vert+C(1+\vert \rho^{n-1}\vert)\vert \rho^{n-1}\vert.
\end{aligned}
\end{equation}
Replacing estimates  \cref{stima5}--\cref{stima8} in \cref{stima_b},   we get
\begin{equation}\label{b bound}
\begin{aligned}
&\|b\|_{\mathcal{C}^{0}} 
\leq  C\Big(\|v^{n-1}\|^2_{\mathcal{C}^{1,0}}+\|v^{n-1}\|^4_{\mathcal{C}^{1,0}}+\|\rho^{n-1}\|^2_{\mathcal{C}^{0}}+\|\rho^{n-1}\|^3_{\mathcal{C}^{0}}+\|\rho^{n-1}\|^4_{\mathcal{C}^{0}}+\|\rho^{n-1}\|^6_{\mathcal{C}^{0}}\\
&+\big(\|v^{n-1}\|_{\mathcal{C}^{1,0}}+ \|\rho^{n-1}\|_{\mathcal{C}^{0}}+\|v^{n-1}\|^2_{\mathcal{C}^{1,0}}+ \|\rho^{n-1}\|^2_{\mathcal{C}^{0}}+ \|\rho^{n-1}\|^4_{\mathcal{C}^{0}}\big)\big(\|v^{n}\|_{\mathcal{C}^{1,0}}+ \|\rho^{n}\|_{\mathcal{C}^{0}}\big)\Big).
\end{aligned}
\end{equation}
Finally,  from \cref{v rho estimate} and estimates \cref{a bound} and \cref{b bound}, we have 
\begin{equation}\label{v+rho}
\begin{aligned}
&\|v^n\|_{\mathcal{C}^{1,0}}+\|\rho^n\|_{\mathcal{C}^{0}}
\leq C(\|a\|_{\mathcal{C}^{0}}+\|b\|_{\mathcal{C}^{0}})\\
&\leq K\Big[\|v^{n-1}\|^2_{\mathcal{C}^{1,0}}+\|v^{n-1}\|^4_{\mathcal{C}^{1,0}}+\|\rho^{n-1}\|^2_{\mathcal{C}^{0}}+\|\rho^{n-1}\|^3_{\mathcal{C}^{0}}+\|\rho^{n-1}\|^4_{\mathcal{C}^{0}}+\|\rho^{n-1}\|^6_{\mathcal{C}^{0}}\\
&+\big(\|v^{n-1}\|_{\mathcal{C}^{1,0}}+ \|\rho^{n-1}\|_{\mathcal{C}^{0}}+\|v^{n-1}\|^2_{\mathcal{C}^{1,0}}+\|v^{n-1}\|^4_{\mathcal{C}^{1,0}}+ \|\rho^{n-1}\|^2_{\mathcal{C}^{0}}+ \|\rho^{n-1}\|^4_{\mathcal{C}^{0}}\big)\big(\|v^{n}\|_{\mathcal{C}^{1,0}}+\|\rho^{n}\|_{\mathcal{C}^{0}}\big)\Big],
\end{aligned}
\end{equation}
where $K$ is a constant which depends only on the data of the problem. Without loss of generality, we assume that $K>1$. 
Assume that  initial guess $(u^0,m^0)$ of the Newton method satisfies
$$
\|v^{0}\|_{\mathcal{C}^{1,0}}+\|\rho^{0}\|_{\mathcal{C}^{0}}\leq \frac{1}{12K}, 
$$
where $K$ as in \cref{v+rho}. Since $K>1$, we have 
$$
\|v^0\|^4_{\mathcal{C}^{1,0}}\leq \|v^{0}\|_{\mathcal{C}^{1,0}},\,\|\rho^{0}\|^6_{\mathcal{C}^{0}}\leq  \|\rho^{0}\|_{\mathcal{C}^{0}},
$$
\begin{equation}\label{stima9}	
\|v^{0}\|_{\mathcal{C}^{1,0}}+ \|\rho^{0}\|_{\mathcal{C}^{0}}+\|v^{0}\|^2_{\mathcal{C}^{1,0}}+\|v^{0}\|^4_{\mathcal{C}^{1,0}}+ \|\rho^{0}\|^2_{\mathcal{C}^{0}}+ \|\rho^{0}\|^4_{\mathcal{C}^{0}}\leq 3(\|v^{0}\|_{\mathcal{C}^{1,0}}+\|\rho^{0}\|_{\mathcal{C}^{0}})\leq \frac{1}{4K},
\end{equation}
$$
\|v^{0}\|^2_{\mathcal{C}^{1,0}}+\|\rho^{0}\|^2_{\mathcal{C}^{0}}\leq 2(\|v^{0}\|_{\mathcal{C}^{1,0}}+\|\rho^{0}\|_{\mathcal{C}^{0}})^2\leq \frac{1}{72K},
$$
$$
\|v^{0}\|^2_{\mathcal{C}^{1,0}}+\|v^{0}\|^4_{\mathcal{C}^{1,0}}+\|\rho^{0}\|^2_{\mathcal{C}^{0}}+\|\rho^{0}\|^3_{\mathcal{C}^{0}}+\|\rho^{0}\|^4_{\mathcal{C}^{0}}+\|\rho^{0}\|^6_{\mathcal{C}^{0}}\leq 4(\|v^{0}\|^2_{\mathcal{C}^{1,0}}+\|\rho^{0}\|^2_{\mathcal{C}^{0}})\leq \frac{1}{18K}.
$$
Replacing the previous estimates in \cref{v+rho} for $n=1$, we get
\begin{equation*}
\|v^1\|_{\mathcal{C}^{1,0}}+\|\rho^1\|_{\mathcal{C}^{0}}\leq \frac{1}{18K}+\frac{1}{4}(\|v^1\|_{\mathcal{C}^{1,0}}+\|\rho^1\|_{\mathcal{C}^{0}}).
\end{equation*}
Hence, by absorbing the term $\frac{1}{4}(\|v^1\|_{\mathcal{C}^{1,0}}+\|\rho^1\|_{\mathcal{C}^{0}})$ on the right hand side, we get
\begin{equation}\label{est2}
\|v^1\|_{\mathcal{C}^{1,0}}+\|\rho^1\|_{\mathcal{C}^{0}}\leq  \frac{2}{27K}<\frac{1}{12K}.
\end{equation}
Arguing  iteratively, we have that, if $\|v^{0}\|_{\mathcal{C}^{1,0}}+\|\rho^{0}\|_{\mathcal{C}^{0}}\leq \frac{1}{12K}$, then 
\begin{equation*}
\|v^{n}\|_{\mathcal{C}^{1,0}}+\|\rho^{n}\|_{\mathcal{C}^{0}}\leq \frac{1}{12K}, 
\qquad\text{for any $n\in\N$.}
\end{equation*}
Repeating an estimate similar to \cref{stima9} for  $n-1$, we have 
$$
\|v^{n-1}\|_{\mathcal{C}^{1,0}}+ \|\rho^{n-1}\|_{\mathcal{C}^{0}}+\|v^{n-1}\|^2_{\mathcal{C}^{1,0}}+\|v^{n-1}\|^4_{\mathcal{C}^{1,0}}+ \|\rho^{n-1}\|^2_{\mathcal{C}^{0}}+ \|\rho^{n-1}\|^4_{\mathcal{C}^{0}}\leq \frac{1}{4K}.
$$
From \cref{v+rho} we obtain
$$
\|v^n\|_{\mathcal{C}^{1,0}}+\|\rho^n\|_{\mathcal{C}^{0}}\leq K\Big[8\big(\|v^{n-1}\|_{\mathcal{C}^{1,0}}+\|\rho^{n-1}\|_{\mathcal{C}^{0}}\big)^2+\frac{1}{4K}\big(\|v^{n}\|_{\mathcal{C}^{1,0}}+\|\rho^{n}\|_{\mathcal{C}^{0}}\big)\Big],
$$
hence we get the estimate
\begin{equation*} 
\|v^{n}\|_{\mathcal{C}^{1,0}}+\|\rho^{n}\|_{\mathcal{C}^{0}}\leq \frac{32K}{3}\big(\|v^{n-1}\|_{\mathcal{C}^{1,0}}+\|\rho^{n-1}\|_{\mathcal{C}^{0}}\big)^2. 
\end{equation*}
Multiplying both the side of the previous estimate for $\frac{32K}{3}$, we have
	$$
	\frac{32K}{3}\big(\|v^{n}\|_{\mathcal{C}^{1,0}}+\|\rho^{n}\|_{\mathcal{C}^{0}}\big)\leq \Big(\frac{32K}{3}\big(\|v^{n-1}\|_{\mathcal{C}^{1,0}}+\|\rho^{n-1}\|_{\mathcal{C}^{0}}\big)\Big)^2.
	$$
	Hence, as we have assumed $\big(\|v^{0}\|_{\mathcal{C}^{1,0}}+\|\rho^{0}\|_{\mathcal{C}^{0}}\big)\leq \frac{1}{12K}$, we obtain by induction that 
	$$
	\|v^{n}\|_{\mathcal{C}^{1,0}}+\|\rho^{n}\|_{\mathcal{C}^{0}}\leq \frac{3}{32K}\Big(\frac{8}{9}\Big)^{2^n}.
	$$
\end{proof} 
\begin{remark}\label{rem_classical}
 We can extend our main convergence rate result to MFG system with superquadratic Hamiltonians, if we assume the system admits a classical solution.  In this case, for the rate of convergence in \cref{Main Thm local}, it  is sufficient to assume that $H$ is smooth, without the uniform bounds on the derivatives in \cref{H}. Indeed, a careful inspection of the previous proof shows that the constant $K$ in   \cref{v+rho} depends  on $(u,m)$, the data of the problem and, in particular, on the derivative of the Hamiltonian computed in $Dv^{n-1}$, $\rho^{n-1}$. If we assume that  $\|v^0\|_{\mathcal{C}^{1,0}}+\|\rho^0\|_{\mathcal{C}^{0}}<1/12K$, then, arguing as in the proof, we have that $\|v^n\|_{\mathcal{C}^{1,0}}+\|\rho^n\|_{\mathcal{C}^{0}}<1/12K$ for any $n\in\N$. Hence, the constant   $K$ in \cref{v+rho}, which depends on $\|v^{n-1}\|_{\mathcal{C}^{1,0}}+\|\rho^{n-1}\|_{\mathcal{C}^{0}}$, does not change with the iterations. The restriction of quadratic $H$ in \cref{Main Thm local} is made to prove  existence and uniqueness of a classical solution to the MFG system. 
\end{remark}
%%%%%%%%%

	\begin{remark} \label{rem_f} Assumption (A3) requires the uniform Lipschitz continuity of $f'$ and therefore it excludes some interesting cases such as $f(m)=m^\alpha$, $\alpha\neq 1$. We show that we can at least consider the case  $\alpha\geq 2$.  The main difference from using (A3) is in the estimate   \cref{est_f'}. We replace the argument in the proof with
		\begin{align*}
			\left\vert f'(m^{n-1})-f'(m)\right\vert 
			&\leq (\alpha-1)(|m^{\alpha-2}|+|(m^{n-1})^{\alpha-2}|)|\rho^{n-1}|\\
			&\le  (\alpha-1)(2|m^{\alpha-2}|+|(\rho^{n-1})^{\alpha-2}|)|\rho^{n-1}|   
		\end{align*}
and therefore
		\begin{align*}
			&\left\vert \Big(f'(m^{n-1})-f'(m)\Big)(m^n-m^{n-1})\right\vert \\
			\leq  {}&(\alpha-1)(2|m^{\alpha-2}|+|(\rho^{n-1})^{\alpha-2}|)|\rho^{n-1}|^2+ (\alpha-1)(2|m^{\alpha-2}|+|(\rho^{n-1})^{\alpha-1}|)|\rho^{n-1}||\rho^n|.  
		\end{align*}
		Then one can proceed similarly as in \cref{Main Thm local}. So far we do not have a corresponding result for $0<\alpha<2$, $\alpha\neq 1$. For global (in time) solutions  to MFGs with separable Hamiltonians and $f(m)=m^\alpha$, $\alpha>0$ we refer to the paper of Cirant and Goffi \cite[Theorem 1.4]{cirant2021maximal}. 
	\end{remark}

%%%%%%%%%%%%%%%%%%%%%%%%%%%%%%%
%                             %
%%%%%%%%%%%%%%%%%%%%%%%%%%%%%%%

\section{The Newton  method for the Mean Field Games system with  saparable Hamiltonian and  nonlocal coupling}\label{sec:newton_nonlocal}
In this section, we consider  the MFG system with Hamiltonian independent of $m$ and nonlocal coupling 
\be\label{MFG nonlocal}
\left\{\begin{aligned}
(i)\qquad &-\partial_t u -\Delta u+H(x,Du)=f[m](x) \qquad &&{\rm in}\,\,Q,\\
(ii) \qquad & \partial_t m-  \Delta m  - \text{div} \big( mH_p(x,Du)\big) =0 &&{\rm in}\,\,Q, \\
&m(x,0)=m_0(x) , \; u(x,T)=g[m(T)](x)&&{\rm in}\,\,\T^d.
\end{aligned}\right.
\ee
We assume that $m_0$ is as in (A1), the Hamiltonian $H$ satisfies (A2), while the assumption on $u_T$ in (A1) and  (A3) are replaced by
\begin{itemize}
\item[(A3')] $f,g : \states \times \mes(\states) \to \R$. $f$, $g$ and their space derivatives $\partial_{x_i}f$, $\partial_{x_i}g$, $\partial_{x_ix_j}g$ are all globally Lipschitz continuous. The measure derivatives $\frac{\delta f}{\delta m}$ and $\frac{\delta g}{\delta m}$$:\T^d\times \mes (\T^d)\times \T^d\rightarrow \R$ are also Lipschitz continuous. For any $m,m'\in \mes(\states)$, 
\begin{equation}\label{mono}
\begin{aligned}
\int_{\T^d} \left(f[m](x)-f[m'](x)\right)d(m-m')(x)\geq 0, \,\,\int_{\T^d} \left(g[m](x)-g[m'](x)\right)d(m-m')(x)\geq 0.
\end{aligned}
\end{equation}
\end{itemize}
\begin{remark}
\cref{mono} implies that $\frac{\delta f}{\delta m}$ and $\frac{\delta g}{\delta m}$ satisfy the following monotonicity property (explained for $f$):
\begin{equation}\label{f mono}
\int_{\T^d}\int_{\T^d}\frac{\delta f}{\delta m}[m](x)(y)\rho(x)\rho(y)dxdy\geq 0
\end{equation}
for any centered measure $\rho$, c.f. \cite[p. 36]{cardaliaguet2019}. 
\end{remark}
\begin{remark}\label{f g Lip} From assumption (A3'), there exists a constant $C>0$,
\begin{align}
\sup_{x\in \T^d}\big|f[m'](x)-f[m](x)\big|+\sup_{x,y \in \T^d}\big|\frac{\delta f}{\delta m}[m'](x)(y)-\frac{\delta f}{\delta m}[m](x)(y)\big|&\leq C {\bf{d}}_1(m,m'),\label{f Lip}\\
\sup_{x\in\T^d}\big|f[m' ](x) - f[m](x) - \int_{\states} \frac{\delta f}{\delta m}[m](x)(y)d(m'-m)(y)\big| &\leq C{\bf{d}}^2_1(m,m') ,\label{deltaf Lip}\\
\sup_{x\in\T^d}\big|g[m' ](x) - g[m](x) - \int_{\states} \frac{\delta g}{\delta m}[m](x)(y)d(m'-m)(y)\big| &\leq C{\bf{d}}^2_1(m,m') .\label{delta g Lip}
\end{align}

\end{remark}
\begin{remark} For the simplicity we will be using the shortened notation, c.f. \cite[p. 60]{cardaliaguet2019}
$$
\int_{\T^d} \frac{\delta f}{\delta m}[m](x)(y)dm'(y)=\frac{\delta f}{\delta m}[m](x)m'
$$
for the duality bracket between $\frac{\delta f}{\delta m}[m]$ and $m'$ at $x$. 
\end{remark}

The next two lemmas  are proved in Cardaliaguet Briani \cite[Lemma 5.2]{bc}.  A similar result with different functional spaces is discussed in Cardaliaguet, Delarue, Lasry and Lions \cite[Lemma 3.3.1]{cardaliaguet2019}.
\begin{lemma}\label{stable nonlocal}
	Under assumptions (A1), (A2) and (A3'), let $(u,m)$ be a classical solution to the system \eqref{MFG nonlocal}. Then, the unique weak solution of the  system 
	\be\label{linearized nonlocal}
	\left\{\begin{aligned}
		(i) \qquad &-\partial_t v -\Delta v+H_p(x,Du)Dv=\frac{\delta f}{\delta m}[m(t)]\rho &&{\rm in}\,\,Q,\\
		(ii) \qquad & \partial_t \rho- \Delta \rho  - {\rm{div}} \big( \rho H_p(x,Du)\big)={\rm{div}}\big(mH_{pp}(x,Du)Dv\big) &&{\rm in}\,\,Q, \\
		& \rho(x,0)=0 , \; v(x,T)=\frac{\delta g}{\delta m}[m(T)]\rho(T) &&{\rm in}\,\,\T^d
	\end{aligned}\right.
	\ee
	is the trivial solution $(v,\rho)=(0,0)$. 
\end{lemma}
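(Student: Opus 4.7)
The plan is to adapt the argument of \cref{stable}: test equation (i) against $\rho$, integrate over $Q$, and combine with equation (ii) so as to obtain an energy identity from which triviality of the solution follows. Two novelties distinguish this case from \cref{stable}. First, the coupling is nonlocal, so testing produces quadratic forms of $\rho$ against $\tfrac{\delta f}{\delta m}$ and $\tfrac{\delta g}{\delta m}$ in place of a single $\int f'(m)|\rho|^2$; hence the monotonicity \cref{f mono} (and its analogue for $g$ coming from \cref{mono}) will replace the pointwise sign of $f'$ in the present argument. Second, the terminal datum $v(T)=\tfrac{\delta g}{\delta m}[m(T)]\rho(T)$ is no longer zero, so the time integration by parts yields a nontrivial boundary contribution rather than vanishing.

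Carrying out the computation: the time integration by parts gives $-\int_{\T^d}\rho(T)v(T)\,dx=-\int_{\T^d}\tfrac{\delta g}{\delta m}[m(T)]\rho(T)\cdot\rho(T)\,dx$ (the $t=0$ endpoint vanishes since $\rho(x,0)=0$); the Laplacian and transport terms rearrange, via spatial integration by parts, into $\int_Q v\bigl(\partial_t\rho-\Delta\rho-\mathrm{div}(\rho H_p(x,Du))\bigr)$; substituting (ii) and integrating $\int_Q v\,\mathrm{div}(mH_{pp}(x,Du)Dv)$ once more by parts produces the identity
\begin{equation*}
\int_Q\tfrac{\delta f}{\delta m}[m(t)]\rho\cdot\rho\,dx\,dt+\int_{\T^d}\tfrac{\delta g}{\delta m}[m(T)]\rho(T)\cdot\rho(T)\,dx+\int_Q mH_{pp}(x,Du)Dv\cdot Dv\,dx\,dt=0.
\end{equation*}

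Each of the three terms is nonnegative. Indeed, by integrating (ii) over $\T^d$ and exploiting periodicity, $\int_{\T^d}\rho(\cdot,t)\,dx$ is conserved and equal to $\int_{\T^d}\rho(\cdot,0)\,dx=0$, so $\rho(\cdot,t)$ is centered for every $t$; monotonicity then handles the first two integrals, while $m>0$ (by the parabolic maximum principle on the Kolmogorov equation in \eqref{MFG nonlocal}) together with convexity of $H$ in $p$ handles the third. All three terms must therefore vanish, and strict positivity of $mH_{pp}$ forces $Dv\equiv 0$. With $Dv\equiv 0$, equation (ii) reduces to the linear Fokker--Planck equation $\partial_t\rho-\Delta\rho-\mathrm{div}(\rho H_p(x,Du))=0$ with $\rho(\cdot,0)=0$, whose only solution is $\rho\equiv 0$ by standard uniqueness for linear parabolic equations in divergence form. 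Returning to (i) with $Dv\equiv 0$ and $\rho\equiv 0$ yields $\partial_t v\equiv 0$; combined with $x$-independence of $v$ and the terminal value $v(T)=\tfrac{\delta g}{\delta m}[m(T)]\rho(T)=0$, this forces $v\equiv 0$.

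I expect the main subtlety to lie in carrying out the duality computation rigorously at the level of weak solutions: since $v$ solves a backward Hamilton--Jacobi type equation with bounded right-hand side and $\rho$ solves a divergence-form parabolic equation, one must verify that the natural regularity $(v,\rho)\in W^{2,1}_r(Q)\times\mathcal H^1_r(Q)$ (as in \cref{system n}) is enough both to use each of $v,\rho$ as a test function for the other equation and to give meaning to the trace $\rho(T)\in L^2(\T^d)$ needed to interpret the quadratic boundary form in $\tfrac{\delta g}{\delta m}$. Once this regularity framework is in place, the remainder of the argument is essentially algebraic and mirrors the proof of \cref{stable}.
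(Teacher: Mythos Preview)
The paper does not give its own proof of this lemma; it simply cites \cite[Lemma 5.2]{bc} (and \cite[Lemma 3.3.1]{cardaliaguet2019}). Your argument is correct and is precisely the standard energy/duality computation used in those references, adapted from the proof of \cref{stable}: you correctly identify the two new features (the nonlocal quadratic forms in $\tfrac{\delta f}{\delta m}$, $\tfrac{\delta g}{\delta m}$ replacing the pointwise sign of $f'$, and the nonzero terminal boundary term), derive the identity
\[
\int_Q\frac{\delta f}{\delta m}[m(t)]\rho\cdot\rho\,dx\,dt+\int_{\T^d}\frac{\delta g}{\delta m}[m(T)]\rho(T)\cdot\rho(T)\,dx+\int_Q mH_{pp}(x,Du)Dv\cdot Dv\,dx\,dt=0,
\]
and use the correct chain of implications ($Dv\equiv 0$ from strict positivity of $mH_{pp}$, then $\rho\equiv 0$ from uniqueness for the linear Fokker--Planck equation, then $v\equiv 0$). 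Your observation that $\rho(\cdot,t)$ is centered, needed to invoke \cref{f mono}, and your remark on the trace regularity of $\rho(T)$ are both to the point; this is exactly how the cited proofs proceed.
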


\begin{lemma}\label{v rho estimate nonlocal}\label{v rho nonlocal_lemma}
	Given $a\in \mathcal{C}^0(Q)$, $b\in \mathcal C^0(Q;\R^d)$. Let $(u,m)$ be a classical solution to the system \eqref{MFG nonlocal} and $(v,\rho)$ be a classical solution of the perturbed linear system
	\be\label{v rho nonlocal}
	\left\{\begin{aligned}
		(i)\qquad &-\partial_t v -\Delta v+H_p(x,Du)Dv=\frac{\delta f}{\delta m}[m(t)](x)\rho+a(x,t)\qquad &&{\rm in}\,\,Q,\\
		(ii)\qquad &\partial_t \rho- \Delta \rho  - {\rm{div}} \big( \rho H_p(x,Du)\big) ={\rm{div}}\big(mH_{pp}(x,Du)Dv\big)+{\rm{div}}(b(x,t))&&{\rm in}\,\,Q, \\
		& \rho(x,0)=0 , \; v(x,T)=\frac{\delta g}{\delta m}[m(T)](x)\rho(T)+c(x)&&{\rm in}\,\,\T^d.
	\end{aligned}\right.
	\ee
	Then, there exists a constant $C>0$ depending on the coefficients of the problem, such that
	\begin{equation*} 
		\|v\|_{\mathcal{C}^{1,0}}+\|\rho\|_{\mathcal{C}^{0}}\leq C\left(\|a\|_{\mathcal{C}^{0}}+\|b\|_{\mathcal{C}^{0}}+\|c\|_{\mathcal{C}^{0}}\right).
	\end{equation*}
\end{lemma}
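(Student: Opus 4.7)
The plan is to mimic the contradiction argument used in \cref{v rho estimate}. By linearity, it suffices to show that there exists a constant $C>0$ such that whenever $\|a\|_{\mathcal{C}^0}+\|b\|_{\mathcal{C}^0}+\|c\|_{\mathcal{C}^0}\leq 1$ one has $\|v\|_{\mathcal{C}^{1,0}}+\|\rho\|_{\mathcal{C}^0}\leq C$. Assuming the contrary, I take sequences $(a^k,b^k,c^k)$ of perturbations with $\|a^k\|_{\mathcal{C}^0}+\|b^k\|_{\mathcal{C}^0}+\|c^k\|_{\mathcal{C}^0}\leq 1$ and corresponding solutions $(v^k,\rho^k)$ with $\theta^k:=\|v^k\|_{\mathcal{C}^{1,0}}+\|\rho^k\|_{\mathcal{C}^0}\geq k$, and I normalize $\tilde v^k:=v^k/\theta^k$, $\tilde\rho^k:=\rho^k/\theta^k$, so that $\|\tilde v^k\|_{\mathcal{C}^{1,0}}+\|\tilde\rho^k\|_{\mathcal{C}^0}=1$ for every~$k$. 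The rescaled pair solves the same system with source data $(a^k/\theta^k,b^k/\theta^k,c^k/\theta^k)$, which tends to zero in $\mathcal{C}^0$ as $k\to\infty$.

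Next, I apply parabolic regularity to extract a convergent subsequence. The equation for $\tilde v^k$ is a linear parabolic equation with bounded coefficients (the Hamiltonian derivatives are smooth and bounded along $(u,m)$) and a uniformly bounded right hand side, since $\frac{\delta f}{\delta m}[m(t)](x)$ is bounded under (A3') and $\|\tilde \rho^k\|_{\mathcal{C}^0}\leq 1$; the terminal condition is likewise uniformly bounded in $\mathcal{C}^0$ (in fact in $\mathcal{C}^2$, using that $\partial_{x_ix_j}\frac{\delta g}{\delta m}$ is bounded). Hence, by standard Schauder-type estimates, $\tilde v^k$ and $D\tilde v^k$ are bounded in $\mathcal{C}^{\alpha,\alpha/2}(Q)$ for some $\alpha\in(0,1)$. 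The equation for $\tilde\rho^k$ is linear in divergence form with bounded drift and bounded source $mH_{pp}(x,Du)D\tilde v^k+b^k/\theta^k$, so $\tilde\rho^k$ is bounded in $\mathcal{C}^{\beta,\beta/2}(Q)$ for some $\beta\in(0,1)$. By Arzelà--Ascoli, up to a subsequence, $(\tilde v^k,\tilde\rho^k)\to (v,\rho)$ in $\mathcal{C}^{1,0}(Q)\times\mathcal{C}^0(Q)$.

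I then identify the limit. Because $\frac{\delta f}{\delta m}[m(t)](\cdot)(\cdot)$ is continuous and bounded and $\tilde\rho^k\to\rho$ uniformly, the term $\frac{\delta f}{\delta m}[m(t)]\tilde\rho^k=\int_{\T^d}\frac{\delta f}{\delta m}[m(t)](x)(y)\tilde\rho^k(y)\,dy$ converges pointwise to $\frac{\delta f}{\delta m}[m(t)]\rho$, and analogously for the terminal term with $\frac{\delta g}{\delta m}[m(T)]\tilde\rho^k(T)$. Since $a^k/\theta^k$, $b^k/\theta^k$, $c^k/\theta^k\to 0$ in $\mathcal{C}^0$, passing to the limit in the weak formulation shows that $(v,\rho)$ is a weak solution of the homogeneous linearized system~\eqref{linearized nonlocal}. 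By \cref{stable nonlocal}, $(v,\rho)\equiv(0,0)$, contradicting $\|v\|_{\mathcal{C}^{1,0}}+\|\rho\|_{\mathcal{C}^0}=\lim_k(\|\tilde v^k\|_{\mathcal{C}^{1,0}}+\|\tilde\rho^k\|_{\mathcal{C}^0})=1$.

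The main obstacle I anticipate is the handling of the nontrivial terminal datum $c(x)/\theta^k$ and its interaction with the nonlocal term $\frac{\delta g}{\delta m}[m(T)]\tilde\rho^k(T)$: one needs enough regularity of $\tilde v^k(\cdot,T)$ (inherited from the $\mathcal{C}^2$ regularity of $\frac{\delta g}{\delta m}[m(T)]$ under (A3')) to justify the Schauder/energy estimates on $[0,T]$ and to obtain uniform $\mathcal{C}^{\alpha,\alpha/2}$ bounds right up to the terminal slice. Once this regularity issue is cleared, the compactness/identification step goes through exactly as in the local case.
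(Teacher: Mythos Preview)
The paper does not supply its own proof of this lemma: it simply cites \cite[Lemma~5.2]{bc} (and \cite[Lemma~3.3.1]{cardaliaguet2019}). Your contradiction/compactness argument is precisely the method of that reference, and is the natural nonlocal adaptation of the proof the paper gives for \cref{v rho estimate}; so your approach coincides with the one the paper points to.

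One small over-claim: you assert the terminal datum is bounded ``in fact in $\mathcal C^2$, using that $\partial_{x_ix_j}\frac{\delta g}{\delta m}$ is bounded''. Assumption (A3') only states that $\frac{\delta g}{\delta m}$ is Lipschitz (it is $\partial_{x_ix_j}g$, not $\partial_{x_ix_j}\frac{\delta g}{\delta m}$, that is assumed Lipschitz). Lipschitz regularity of the terminal datum is however sufficient for the $\mathcal C^{\alpha,\alpha/2}$ bound on $D\tilde v^k$ that you need, so this does not affect the validity of the argument.
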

Existence and uniqueness result for a classical solution to \cref{MFG nonlocal} under rather general assumptions which, in particular, include (A1), (A2) and (A3'),  can be found in \cite{ll}.\par
  The Newton system for solving \cref{MFG nonlocal}, analogous to \cref{Newton system}, can be written as
\be\label{Newton system nonlocal}
\left\{\begin{aligned}
(i)\qquad &-\partial_t u^n -\Delta u^n+H_p(x,Du^{n-1})D(u^n-u^{n-1})\\
={}&-H(x,Du^{n-1})+f[m^{n-1}(t)](x)+\frac{\delta f}{\delta m}[m^{n-1}(t)](x)(m^n-m^{n-1})\qquad &&{\rm in}\,\,Q,\\
(ii)\qquad &\partial_t m^n- \Delta m^n  - \text{div} \big( m^nH_p(x,Du^{n-1})\big) ={\rm{div}}\big(m^{n-1}H_{pp}(x,Du^{n-1})(Du^n-Du^{n-1})\big)&&{\rm in}\,\,Q, \\
&m^n(x,0)=m_0(x) , \; u^n(x,T)=g[m^{n-1}(T)]+\frac{\delta g}{\delta m}[m^{n-1}(T)](x)\big(m^n(T)-m^{n-1}(T)\big)&&{\rm in}\,\,\T^d.
\end{aligned}\right.
\ee
Existence and uniqueness of a classical solution to \cref{Newton system nonlocal} can be proved as in   \cref{system n}.
\begin{theorem}
Let $(u,m)$ be  the solution of system \eqref{MFG nonlocal} and $(u^n,m^n)$ is the sequence generated by Newton's algorithm \eqref{Newton system nonlocal}. Set $v^n=u^n-u$, $\rho^n=m^n-m$. There exists a constant $\eta>0$ such that if $\|v^0\|_{\mathcal{C}^{1,0}}+\|\rho^0\|_{\mathcal{C}^{0}}\leq \eta$ then $\|v^n\|_{\mathcal{C}^{1,0}}+\|\rho^n\|_{\mathcal{C}^{0}}\rightarrow 0$ with  a quadratic rate of convergence.
\end{theorem}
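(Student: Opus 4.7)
The plan is to mirror the strategy of the proof of \cref{Main Thm local}, using \cref{v rho nonlocal_lemma} in place of \cref{v rho estimate}. Setting $v^n = u^n - u$ and $\rho^n = m^n - m$, I would first subtract \cref{MFG nonlocal} from \cref{Newton system nonlocal} and, after adding and subtracting the linearization of $H$ and $f$ at $(u,m)$, recast the system satisfied by $(v^n,\rho^n)$ as a perturbation of the linear system of \cref{v rho nonlocal_lemma}:
\begin{align*}
-\partial_t v^n - \Delta v^n + H_p(x,Du)Dv^n &= \frac{\delta f}{\delta m}[m(t)](x)\rho^n + a(x,t),\\
\partial_t \rho^n - \Delta \rho^n - \text{div}(\rho^n H_p(x,Du)) &= \text{div}\bigl(mH_{pp}(x,Du)Dv^n\bigr) + \text{div}(b(x,t)),\\
\rho^n(x,0) = 0, \qquad v^n(x,T) &= \frac{\delta g}{\delta m}[m(T)](x)\rho^n(T) + c(x).
\end{align*}
The whole task is then reduced to controlling $\|a\|_{\mathcal{C}^0}$, $\|b\|_{\mathcal{C}^0}$ and $\|c\|_{\mathcal{C}^0}$ by the square of the previous error $\|v^{n-1}\|_{\mathcal{C}^{1,0}}+\|\rho^{n-1}\|_{\mathcal{C}^0}$, plus a cross contribution of the form $(\|v^{n-1}\|_{\mathcal{C}^{1,0}}+\|\rho^{n-1}\|_{\mathcal{C}^0})(\|v^n\|_{\mathcal{C}^{1,0}}+\|\rho^n\|_{\mathcal{C}^0})$ that will later be absorbed.

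For the contributions of $H$ to $a$ and $b$, the computations parallel those in \cref{Main Thm local} and become considerably simpler because $H$ is independent of $m$: using $Du^n-Du^{n-1}=Dv^n-Dv^{n-1}$, one is left with second-order Taylor remainders of $H$ and $mH_p$ at $(x, Du)$, together with cross terms of order $\|Dv^{n-1}\|\,(\|v^n\|_{\mathcal{C}^{1,0}}+\|\rho^n\|_{\mathcal{C}^0})$, handled using (A2). The genuinely new ingredient is the nonlocal coupling residual
\[
f[m^{n-1}] - f[m] + \frac{\delta f}{\delta m}[m^{n-1}](m^n - m^{n-1}) - \frac{\delta f}{\delta m}[m]\rho^n,
\]
which I would decompose as
\[
\Bigl(f[m^{n-1}] - f[m] - \frac{\delta f}{\delta m}[m](m^{n-1}-m)\Bigr) + \Bigl(\frac{\delta f}{\delta m}[m^{n-1}] - \frac{\delta f}{\delta m}[m]\Bigr)(m^n - m^{n-1}).
\]
The first bracket is $O(\|\rho^{n-1}\|_{\mathcal{C}^0}^2)$ by \cref{deltaf Lip}, and the second is $O(\|\rho^{n-1}\|_{\mathcal{C}^0}(\|\rho^n\|_{\mathcal{C}^0}+\|\rho^{n-1}\|_{\mathcal{C}^0}))$ by \cref{f Lip}, using ${\bf{d}}_1(m',m) \le \|m'-m\|_{\mathcal{C}^0}$. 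An analogous decomposition of the terminal residual $c(x)$, using \cref{delta g Lip} and the Lipschitz regularity of $\frac{\delta g}{\delta m}$ in (A3'), yields the same type of bound on $\|c\|_{\mathcal{C}^0}$.

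Combining these estimates, \cref{v rho nonlocal_lemma} delivers an inequality with the same structure as \cref{v+rho}: there exists $K>0$, depending only on the data and on $(u,m)$, such that
\[
\|v^n\|_{\mathcal{C}^{1,0}}+\|\rho^n\|_{\mathcal{C}^0} \le K\Bigl[\bigl(\|v^{n-1}\|_{\mathcal{C}^{1,0}}+\|\rho^{n-1}\|_{\mathcal{C}^0}\bigr)^2 + \bigl(\|v^{n-1}\|_{\mathcal{C}^{1,0}}+\|\rho^{n-1}\|_{\mathcal{C}^0}\bigr)\bigl(\|v^n\|_{\mathcal{C}^{1,0}}+\|\rho^n\|_{\mathcal{C}^0}\bigr)\Bigr].
\]
Choosing $\eta$ small enough that $K\eta \le 1/4$, the term involving $\|v^n\|_{\mathcal{C}^{1,0}}+\|\rho^n\|_{\mathcal{C}^0}$ on the right-hand side can be absorbed, and a straightforward induction (as at the end of \cref{Main Thm local}) preserves $\|v^{n-1}\|_{\mathcal{C}^{1,0}}+\|\rho^{n-1}\|_{\mathcal{C}^0} \le \eta$ for every $n$, yielding $\|v^n\|_{\mathcal{C}^{1,0}}+\|\rho^n\|_{\mathcal{C}^0} \le 2K\bigl(\|v^{n-1}\|_{\mathcal{C}^{1,0}}+\|\rho^{n-1}\|_{\mathcal{C}^0}\bigr)^2$, which is the desired quadratic rate. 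I expect the main technical hurdle to be the careful bookkeeping of the new terminal perturbation $c(x)$, which has no counterpart in \cref{Main Thm local}: the nonlinear terminal cost $g[m(T)]$ contributes an extra boundary-layer error that must be estimated via \cref{delta g Lip} at $t=T$ while remaining compatible with the $\mathcal{C}^0$-norm of $\rho^n(T)$, so that it can be fed into \cref{v rho nonlocal_lemma} without any loss.
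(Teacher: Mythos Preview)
Your proposal is correct and follows essentially the same route as the paper: recast $(v^n,\rho^n)$ as a solution of the perturbed linear system, estimate $a$, $b$, $c$ via Taylor remainders and the Lipschitz bounds \cref{f Lip}, \cref{deltaf Lip}, \cref{delta g Lip}, then apply \cref{v rho nonlocal_lemma} and absorb the cross term. The only cosmetic difference is that you expand the nonlocal residual around $m$ (so the first bracket is directly \cref{deltaf Lip}), whereas the paper expands around $m^{n-1}$; both yield the same $O(\|\rho^{n-1}\|^2)+O(\|\rho^{n-1}\|\,\|\rho^n\|)$ bound, and the paper also retains a harmless higher-order term $\|v^{n-1}\|_{\mathcal C^{1,0}}^4$ in its final inequality that you have legitimately absorbed into the quadratic term under the smallness assumption.
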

\begin{proof}
We first  observe that  $v^n=u^n-u$, $\rho^n=m^n-m$ satisfy
\begin{align*}
&	-\partial_t v^n -\Delta v^n+H_p(x,Du)\cdot Dv^n=\frac{\delta f}{\delta m}[m](x)(\rho^n)+a,\\
&\partial_t \rho^n -\Delta \rho^n-{\rm{div}}(\rho^nH_p(x,Du))=
{\rm{div}}\big(mH_{pp}(x,Du)Dv^n\big)+{\rm{div}}(b),\\
&\rho^n(x,0)=0,\,v^n(x,T)= \frac{\delta g}{\delta m}[m(T)](x)(\rho^n)  +c(x),             
\end{align*}
where
\begin{equation}\label{nonlocal_a}
\begin{aligned}
a:={}& H_p(x,Du)(Du^n-Du)+H(x,Du)-H(x,Du^{n-1})-H_p(x,Du^{n-1})D(u^{n}- u^{n-1})\\
&-\frac{\delta f}{\delta m}[m](x)(m^n-m)+f(m^{n-1})-f(m)+\frac{\delta f}{\delta m}[m^{n-1}](x)(m^n-m^{n-1}),
\end{aligned}
\end{equation}
\begin{equation*}
\begin{aligned}
b:={}&-\rho^nH_p(x,Du)-mH_{pp}(x,Du)Dv^n+ m^{n-1}H_p(x,Du^{n-1}) -mH_p(x,Du)             \\
&+ (m^{n}- m^{n-1}) H_p(x,Du^{n-1})+m^{n-1}H_{pp}(x,Du^{n-1})(Du^n-Du^{n-1}),
\end{aligned}
\end{equation*}
$$
c:= g[m^{n-1}(T)](x)-g[m(T)](x)-\frac{\delta g}{\delta m}[m(T)](x)(\rho^n)+\frac{\delta g}{\delta m}[m^{n-1}(T)](x)\big(m^n(T)-m^{n-1}(T)\big).
$$
We first consider the nonlocal coupling terms as they constitute the main differences with respect to the proof of \cref{Main Thm local}. Rewrite the terms containing $f$ in
\cref{nonlocal_a} as 
\begin{equation*}
\begin{aligned}
&-\frac{\delta f}{\delta m}[m](x)(m^n-m)+f(m^{n-1})-f(m)+\frac{\delta f}{\delta m}[m^{n-1}](x)(m^n-m^{n-1})\\
={}&f(m^{n-1})-f(m)-\frac{\delta f}{\delta m}[m^{n-1}](x)(\rho^{n-1})+\Big(\frac{\delta f}{\delta m}[m^{n-1}](x)-\frac{\delta f}{\delta m}[m](x)\Big)(\rho^n).
\end{aligned}
\end{equation*}
By Lipschitz continuity of $\frac{\delta f}{\delta m}$, i.e. \cref{f Lip} and \cref{deltaf Lip}, we get
$$
\|-\frac{\delta f}{\delta m}[m](\cdot)(m^n-m)+f(m^{n-1})-f(m)+\frac{\delta f}{\delta m}[m^{n-1}](\cdot)(m^n-m^{n-1})\|_{\mathcal{C}^{0}}\leq C(\|\rho^{n-1}\|_{\mathcal{C}^{0}}^2+\|\rho^{n-1}\|_{\mathcal{C}^{0}}\|\rho^n\|_{\mathcal{C}^{0}}).
$$
Similarly, rewriting 
$$
c= g[m^{n-1}(T)](x)-g[m(T)](x)-\frac{\delta g}{\delta m}[m^{n-1}(T)](x)(\rho^{n-1}(T))+\Big(\frac{\delta g}{\delta m}[m^{n-1}(T)](x)-\frac{\delta g}{\delta m}[m(T)](x)\Big)(\rho^n(T)),
$$
we obtain 
$$
\|c\|_{\mathcal{C}^{0}}\leq C\left(\|\rho^{n-1}\|_{\mathcal{C}^{0}}^2+\|\rho^{n-1}\|_{\mathcal{C}^{0}}\|\rho^n\|_{\mathcal{C}^{0}}\right).
$$
By a straightforward adaptation of the proof of  \cref{Main Thm local}, we estimate the other terms in  $a$ and $b$. Indeed,  we have
\begin{equation*}
\begin{aligned}
&\left\vert H_p(x,Du)(Du^n-Du)+H(x,Du)-H(x,Du^{n-1})-H_p(x,Du^{n-1})D(u^{n}- u^{n-1})\right\vert \\
\leq {}&C(\vert Dv^{n-1}\vert^2+\vert Dv^{n-1}\vert\vert Dv^{n}\vert),
\end{aligned}
\end{equation*}
hence 
$$
\|a\|_{\mathcal{C}^{0}}\leq C\left(\|v^{n-1}\|_{\mathcal{C}^{1,0}}^2+\|v^{n-1}\|_{\mathcal{C}^{1,0}}\|v^n\|_{\mathcal{C}^{1,0}}+\|\rho^{n-1}\|_{\mathcal{C}^{0}}^2+\|\rho^{n-1}\|_{\mathcal{C}^{0}}\|\rho^n\|_{\mathcal{C}^{0}}\right).
$$
Moreover, by 
\begin{equation*}
\begin{aligned}
b:={}&\rho^n(H_p(x,Du)-H_p(x,Du^{n-1}))+\rho^{n-1}H_{pp}(x,Du)Dv^n+ m^{n-1}H_p(x,Du^{n-1}) -mH_p(x,Du)             \\
&-(m^{n-1}-m) H_p(x,Du^{n-1})-m^{n-1}H_{pp}(x,Du^{n-1})(Du^{n-1}-Du),
\end{aligned}
\end{equation*}
we  obtain
\begin{equation*}
\begin{split}
&\|b\|_{\mathcal{C}^{0}}\\
\leq {}&C\left(\|v^{n-1}\|_{\mathcal{C}^{1,0}}\|\rho^{n}\|_{\mathcal{C}^{0}}+ \|v^{n}\|_{\mathcal{C}^{1,0}}\|\rho^{n-1}\|_{\mathcal{C}^{0}}+\|\rho^{n-1}\|^2_{\mathcal{C}^{0}}+\|v^{n-1}\|_{\mathcal{C}^{1,0}}\|\rho^{n-1}\|_{\mathcal{C}^{0}}+(1+\|\rho^{n-1}\|_{\mathcal{C}^{0}})\|v^{n-1}\|^2_{\mathcal{C}^{1,0}}\right).
\end{split}
\end{equation*}
Collecting the estimate of $a$, $b$ and $c$, by \cref{v rho nonlocal_lemma} we obtain
$$
\|v^n\|_{\mathcal{C}^{1,0}}+\|\rho^n\|_{\mathcal{C}^{0}} \leq C\left(\|v^{n-1}\|_{\mathcal{C}^{1,0}}^2+\|v^{n-1}\|_{\mathcal{C}^{1,0}}^4+\|\rho^{n-1}\|^2_{\mathcal{C}^{0}}+(\|v^{n-1}\|_{\mathcal{C}^{1,0}}+\|\rho^{n-1}\|_{\mathcal{C}^{0}})(\|v^{n}\|_{\mathcal{C}^{1,0}}+\|\rho^{n}\|_{\mathcal{C}^{0}})\right).
$$
We omit the rest of the proof as it is very similar to \cref{Main Thm local}.
\end{proof}

\begin{remark}
The monotonicity conditions \cref{mono}   guarantee uniqueness of the  solution to the    MFG system with nonlocal coupling. If we do not assume \cref{mono}, the result and proof methodology in this section can be   adapted to prove  local convergence to a stable solution of a potential MFG. Recall that, for a potential MFG,  a solution  $(u,m)$ is said to be stable if the only solution to the   linearized MFG system at $(u,m)$   is the trivial one (see \cite{bc}). In other words, instead of proving that $(v,\rho)=0$ as in \cref{stable nonlocal}, we use it as part of the definition of the stable solution. We plan to study this problem in the future.
\end{remark}

\begin{remark}
In this paper we separated the discussions on MFGs with local or nonlocal couplings for simplicity. One can easily replace in \cref{MFG} one or both of the terms $f$ and $g$ by nonlocal couplings and obtain similar results as \cref{Well posed} and \cref{Main Thm local}. One can also consider the nonseparable Hamiltonians with nonlocal congestions, e.g. replacing $m$ in $H(x,m,p)$ by a convolution with some kernels, as in \cite{achdou2015}. However, even though the existence of solution to such type of MFGs has been demonstrated in \cite{achdou2015}, it is not clear how one can apply the Hessian condition \cref{H Hessian} to show the uniqueness of a global (in time) classical solution and the stability property (\cref{v rho estimate}) in the nonlocal congestion case. We leave further developments in this direction for the future.
\end{remark}

\appendix
\counterwithin{theorem}{section}
\section{Appendix: some classical parabolic estimates results}\label{sec:appendix}
Consider the linear parabolic equation:
\begin{equation}\label{Linear Parabolic}
		\left\{\begin{aligned}
		 \qquad &	-\partial_t u-\Delta u+b(x,t)\cdot  \nabla u+c(x,t)u=f(x,t) \qquad &&{\rm in}\,\,Q,\\
		 \qquad &u(x,T)=u_T(x) &&{\rm in}\,\,\T^d.
		\end{aligned}\right.
	\end{equation}	
The following two results are very classical for equations on cylinders with boundary conditions (see \cite[Theorem 5.1, p. 320 ]{LSU} and \cite[Theorem 9.1 pp. 341-342]{LSU}). A complete proof of them in the flat torus case can be found in \cite[Appendix pp. 17-18]{cirant2020}.
%%%%
\begin{proposition}\label{linear estim}
	Let $b\in \mathcal C^{\alpha,\alpha/2}(Q;\R^d)$, $c$ and $f$ belong to $\mathcal C^{\alpha,\alpha/2}(Q)$ and $u_T\in C^{2+\alpha}(\T^d)$.  Then the problem \eqref{Linear Parabolic}
admits a unique solution $u\in \mathcal C^{2+\alpha,1+\alpha/2}(Q)$  and it holds
	\begin{equation*}
		\| u\| _{\mathcal C^{2+\alpha,1+\alpha/2}(Q)}\leq C\left(\| f\| _{\mathcal C^{\alpha,\alpha/2}(Q)}+\| u_T\| _{\mathcal C^{2+\alpha}(\T^d)}\right),
	\end{equation*}
	where $C$ depends on the $\mathcal C^{\alpha,\alpha/2}$ norms of $b$, $c$ and remains bounded for bounded values of $T$.\end{proposition}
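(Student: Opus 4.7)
The plan is to reduce this classical Schauder-type statement for parabolic equations on the torus to the standard LSU theory, as done in Cirant 2020 Appendix. I would proceed in three stages: reduction to zero terminal data via a lift, the method of continuity for existence, and a frozen-coefficient argument for the a priori Schauder estimate.

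First, I homogenize the terminal condition by setting $w(x,t)=u(x,t)-u_T(x)$. Then $w$ solves
\begin{equation*}
-\partial_t w-\Delta w+b\cdot\nabla w+c\,w=\tilde f,\qquad w(x,T)=0,
\end{equation*}
with $\tilde f=f+\Delta u_T-b\cdot\nabla u_T-c\,u_T\in\mathcal{C}^{\alpha,\alpha/2}(Q)$ and $\|\tilde f\|_{\mathcal{C}^{\alpha,\alpha/2}}\le \|f\|_{\mathcal{C}^{\alpha,\alpha/2}}+C\|u_T\|_{\mathcal{C}^{2+\alpha}(\T^d)}$, the constant depending only on $\|b\|_{\mathcal{C}^{\alpha,\alpha/2}}$ and $\|c\|_{\mathcal{C}^{\alpha,\alpha/2}}$. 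A time reversal $v(x,t):=w(x,T-t)$ turns this into an equivalent forward problem with zero initial data. Since $\T^d$ has no spatial boundary, the parabolic boundary reduces to the flat initial surface $\T^d\times\{0\}$, so no boundary-regularity subtleties arise.

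Second, I would establish existence by the method of continuity, interpolating between $L_0 v:=\partial_t v-\Delta v$ and $L_1 v:=\partial_t v-\Delta v+\tilde b\cdot\nabla v+\tilde c\,v$ via $L_\sigma=(1-\sigma)L_0+\sigma L_1$. For $\sigma=0$ the heat equation on the torus with zero initial data and right-hand side $g\in\mathcal{C}^{\alpha,\alpha/2}$ admits a unique $\mathcal{C}^{2+\alpha,1+\alpha/2}$ solution, as follows directly from Fourier series / heat-kernel convolution, with the desired estimate. Provided the a priori Schauder bound holds uniformly in $\sigma\in[0,1]$, the method of continuity extends solvability from $\sigma=0$ to $\sigma=1$. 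Uniqueness for \eqref{Linear Parabolic} follows from the parabolic maximum principle applied to the difference of two solutions, after multiplying the equation by $e^{\lambda t}$ if needed to handle the sign of $c$.

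Third, the heart of the proof is the uniform a priori estimate. I would freeze the coefficients $b,c$ at a point $(x_0,t_0)$ and rewrite the equation as a constant-coefficient heat equation with right-hand side $\tilde f+(b(x_0,t_0)-b)\cdot\nabla v+(c(x_0,t_0)-c)v$. The fundamental Schauder estimate for $\partial_t v-\Delta v=g$ on parabolic cylinders then applies. A covering of $Q$ by parabolic cylinders $Q_r$ of sufficiently small radius makes the perturbation terms small (by Hölder continuity of $b,c$), and one closes the estimate via the standard interpolation inequality
\begin{equation*}
\|\nabla v\|_{\mathcal{C}^{\alpha,\alpha/2}}+\|v\|_{\mathcal{C}^{\alpha,\alpha/2}}\le\epsilon\|v\|_{\mathcal{C}^{2+\alpha,1+\alpha/2}}+C_\epsilon\|v\|_{\mathcal{C}^0},
\end{equation*}
absorbing the top-order term on the left. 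The remaining $\|v\|_{\mathcal{C}^0}$ is bounded by $\|\tilde f\|_{\mathcal{C}^0}$ via the maximum principle. The main obstacle is precisely this absorption step: one must verify that the localization radius depends only on the $\mathcal{C}^{\alpha,\alpha/2}$ norms of $b,c$, and that the number of cylinders needed to cover $Q$ stays bounded for bounded $T$, so that the final constant $C$ in the estimate has the dependence claimed in the statement. This is routine but requires the standard Campanato-type characterization of $\mathcal{C}^{2+\alpha,1+\alpha/2}$; the arguments in LSU Chapter IV, Theorem 5.1 or in Cirant 2020 Appendix carry over verbatim to the flat-torus setting since periodicity only simplifies the boundary analysis.
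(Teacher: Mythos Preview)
Your outline is correct and follows the classical route---homogenize the terminal data, run the method of continuity against the heat operator, and close the Schauder estimate by freezing coefficients and interpolating. The paper itself does not give a proof of this proposition at all: it simply states the result and cites \cite[Theorem 5.1, p.~320]{LSU} for the cylinder case and \cite[Appendix]{cirant2020} for the adaptation to the torus. Your sketch is essentially a summary of what those references do, so there is no methodological divergence to speak of; you have filled in what the paper deliberately left to the literature.
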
	
%%%%
		\begin{proposition}\label{linear estim Sobolev}
	Let $r>d+2$, $b\in L^\infty(Q;\R^d)$, $c\in L^\infty(Q)$, $f\in L^r(Q)$ and $u_T\in W^{2-\frac{2}{r}}_r(\T^d)$.  Then the problem \eqref{Linear Parabolic} admits a unique solution $u\in W^{1,2}_r(Q)$ and it holds
	\begin{equation*}
		\| u\| _{W^{2,1}_r(Q)}\leq C\left(\| f\| _{L^r(Q)}+\| u_T\| _{W^{2-\frac{2}{r}}_r(\T^d)}\right),
	\end{equation*}
	where $C$ depends on the norms of $b$, $c$ and remains bounded for bounded values of $T$. Moreover, the following embedding  holds:
	\begin{equation}\label{embedding}
\|u\|_{\mathcal C^{2-\frac{d+2}{r},1-\frac{d+2}{2r}}(Q)} \leq C\|u\|_{W^{2,1}_r(Q)}.
\end{equation}
(see \cite[Corollary pp. 342-343]{LSU})
\end{proposition}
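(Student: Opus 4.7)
The plan is to reduce the proposition to the classical parabolic $L^r$ theory (essentially \cite[Theorem IV.9.1]{LSU} on cylinders) and transfer it to the flat torus case, exactly as carried out in \cite[Appendix]{cirant2020}. Since it is a backward equation, the first step is the change of variable $\tilde u(x,t):=u(x,T-t)$ which turns \eqref{Linear Parabolic} into a forward parabolic Cauchy problem on $Q$ with initial datum $u_T\in W^{2-2/r}_r(\T^d)$ and coefficients $\tilde b(x,t)=b(x,T-t)$, $\tilde c(x,t)=c(x,T-t)\in L^\infty(Q)$, and source $\tilde f\in L^r(Q)$. On $\T^d$ there is no spatial boundary so no compatibility conditions at the boundary are needed; only the trace space at $t=0$ matters, and the sharp trace space of $W^{2,1}_r(Q)$ is exactly $W^{2-2/r}_r(\T^d)$ by classical interpolation.

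I would first treat the constant-coefficient model problem $\partial_t w-\Delta w=F$ with initial datum $w_0$ on $\T^d$. Existence, uniqueness, and the a priori estimate $\|w\|_{W^{2,1}_r(Q)}\le C(\|F\|_{L^r(Q)}+\|w_0\|_{W^{2-2/r}_r(\T^d)})$ are obtained either from the heat semigroup on $\T^d$ (Fourier series representation combined with maximal $L^r$-regularity, which holds on $\T^d$ since $-\Delta$ has bounded imaginary powers/$H^\infty$-calculus), or directly by transferring the whole-space result via a periodic extension and a finite covering argument; this is the core analytic input.

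Then I would incorporate the lower-order terms $\tilde b\cdot\nabla \tilde u+\tilde c\,\tilde u$ by a continuation/fixed-point argument. Define $\Phi:W^{2,1}_r(Q)\to W^{2,1}_r(Q)$ by letting $\Phi(v)$ solve the heat equation with source $\tilde f-\tilde b\cdot\nabla v-\tilde c v$ and initial datum $u_T$. For $r>d+2$ the anisotropic Sobolev embedding (see the embedding \eqref{embedding} itself) gives $\nabla v\in L^r(Q)$ with norm controlled by $\|v\|_{W^{2,1}_r(Q)}$, so by the model estimate one gets $\|\Phi(v)\|_{W^{2,1}_r(Q)}\le C(\|\tilde f\|_{L^r}+\|u_T\|_{W^{2-2/r}_r}+(\|\tilde b\|_\infty+\|\tilde c\|_\infty)\|v\|_{W^{2,1}_r(Q)})$. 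By restricting first to a short time interval $[0,\tau]$ with $\tau$ small (depending on $\|\tilde b\|_\infty,\|\tilde c\|_\infty$) and using that the $W^{2,1}_r$ norm on $[0,\tau]$ of the pure heat contribution from the drift/reaction terms is $o(1)$ as $\tau\to 0$, $\Phi$ becomes a contraction, giving local existence and uniqueness. Iterating this on successive intervals of uniform length and gluing yields a global $W^{2,1}_r(Q)$ solution, with an estimate of the form $\|u\|_{W^{2,1}_r(Q)}\le C(\|f\|_{L^r(Q)}+\|u_T\|_{W^{2-2/r}_r(\T^d)})$ whose constant depends only on $\|b\|_\infty,\|c\|_\infty,T$ and stays bounded for bounded $T$ (tracking the number of iterations).

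The embedding \eqref{embedding} is quoted directly from \cite[Lemma II.3.3]{LSU} (parabolic Sobolev embedding, or \cite[Corollary pp.~342-343]{LSU}): for $r>d+2$, $W^{2,1}_r(Q)\hookrightarrow C^{2-(d+2)/r,1-(d+2)/(2r)}(Q)$, which on the torus follows from the same scaling arguments since the periodic boundary removes boundary issues. The main obstacle in this plan is the maximal $L^r$-regularity for the heat equation on $Q=\T^d\times(0,T)$ with the sharp trace estimate at $t=0$; once this is in hand, the lower-order terms and the global-in-time extension are routine fixed-point/continuation arguments.
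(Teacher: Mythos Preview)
The paper does not actually prove this proposition; it merely records it as classical, citing \cite[Theorem~9.1, pp.~341--342]{LSU} for the cylinder case and \cite[Appendix]{cirant2020} for the transfer to $\T^d$, and quoting the embedding from \cite[Corollary pp.~342--343]{LSU}. So there is nothing to compare against beyond those citations.

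Your outline is the standard route behind those references: time-reverse to a forward problem, invoke maximal $L^r$-regularity for the heat equation on $\T^d$, and absorb the lower-order terms $\tilde b\cdot\nabla+\tilde c$ by a short-time contraction followed by iteration. This is correct in spirit. One technical point worth tightening: in the contraction step you should first subtract the free heat extension of the initial datum $u_T$ so that the fixed-point map acts on the subspace of $W^{2,1}_r(Q_\tau)$ with zero trace at $t=0$; only on that subspace does the $L^r$-norm of $\nabla v$ and $v$ on $[0,\tau]$ pick up a factor that vanishes as $\tau\to 0$ (via the interpolation/trace inequality), which is what drives the contraction. Without this reduction the claim ``$o(1)$ as $\tau\to 0$'' is not automatic. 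With that adjustment the argument goes through, and you have in fact written more than the paper itself, which simply defers to the literature.
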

	Next we introduce some results for the parabolic equations in divergence form, for the proof we refer to \cite[Proposition A.3]{tang2022learning}. We use the notation $\iota:=\{1,2\}$.
%%%%
\begin{proposition}\label{m stability} Let $r>d+2$,  $m_0\in W^1_r(\T^d)$,
	 $q_\iota\in L^\infty(Q;\R^d)$ and consider the parabolic equation in divergence form 
\begin{equation}\label{m1}
		\left\{\begin{aligned}
 \qquad &	\partial_t m_\iota- \Delta m_\iota - {\rm{div}} ( m_\iota q_\iota) =0\qquad &&{\rm in}\,\,Q,\\
		\qquad &	m_\iota(0,x)=m_0(x)&&{\rm in}\,\,\T^d.
		\end{aligned}\right.
	\end{equation}
Then there exists a unique solution $m_\iota$ in $\mathcal H^1_r(Q)$ to \eqref{m1} Then, 
there exists a constant $C$ depending only on $r,T,d, R$, $\|m_0\|_{W^1_r(\T^d)}$ and $\|q_\iota\|_{L^\infty(Q;\R^d)}$  such that 
$$\| m_\iota\|_{\mathcal H^1_r(Q)}\leq C.$$ 
Moreover, denote by $\delta m=m_1-m_2$ and $\delta q=q_1-q_2$. Then  
$$\|\delta m\|_{\mathcal H^1_r(Q)}\leq C\|\delta q\|_{L^\infty(Q;\R^d)}$$	
with $C$ as above.
\end{proposition}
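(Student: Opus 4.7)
The plan is to treat \cref{m1} as a linear divergence-form parabolic equation with bounded drift $q_\iota$, obtaining the $\mathcal H^1_r$ bound by a two-step bootstrap (energy solution $\to$ $L^\infty$ regularity $\to$ $L^r$-gradient regularity), and then deriving the stability estimate by applying the same machinery to the zero-initial-data equation satisfied by $\delta m$.

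First, I would establish existence and uniqueness of a weak solution in the natural energy space $L^2(0,T;H^1(\T^d))\cap L^\infty(0,T;L^2(\T^d))$ by a Galerkin approximation of the weak formulation
\[
\langle \partial_t m_\iota,\phi\rangle+\int_{\T^d}\nabla m_\iota\cdot\nabla\phi\,dx+\int_{\T^d}m_\iota\, q_\iota\cdot\nabla\phi\,dx=0,
\]
testing $\phi=m_\iota$ and absorbing the lower-order term via Cauchy--Schwarz and Gronwall (using only $\|q_\iota\|_{L^\infty}$). Uniqueness follows from linearity and the same energy identity. Next, because $q_\iota\in L^\infty$, classical De Giorgi--Nash--Moser theory for divergence-form parabolic equations yields $m_\iota\in L^\infty(Q)\cap\mathcal C^{\alpha,\alpha/2}(Q)$ for some $\alpha\in(0,1)$, with a bound depending only on $\|m_0\|_{L^\infty}$ (hence on $\|m_0\|_{W^1_r}$ since $r>d+2$) and on $\|q_\iota\|_{L^\infty}$. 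At this point I rewrite the equation as
\[
\partial_t m_\iota-\Delta m_\iota={\rm div}(F_\iota),\qquad F_\iota:=m_\iota q_\iota\in L^\infty(Q;\R^d),
\]
and invoke the $L^r$ theory for the heat operator with divergence-form right-hand side and $W^1_r$ initial datum. Since $\|F_\iota\|_{L^r(Q)}\le|Q|^{1/r}\|m_\iota\|_{L^\infty}\|q_\iota\|_{L^\infty}$ is already controlled, this delivers $\nabla m_\iota\in L^r(Q)$ together with $\partial_t m_\iota\in(W^{1,0}_{r'}(Q))'$ and produces the desired bound $\|m_\iota\|_{\mathcal H^1_r(Q)}\le C$.

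For the stability estimate, subtracting the two equations shows that $\delta m$ solves
\[
\partial_t\delta m-\Delta\delta m-{\rm div}(\delta m\, q_1)={\rm div}(m_2\,\delta q)\quad\text{in }Q,\qquad \delta m(0,\cdot)=0,
\]
which is a problem of exactly the form \cref{m1} with the same drift $q_1$, zero initial data, and an extra forcing term $\mathrm{div}(G)$ with $G:=m_2\,\delta q$ satisfying $\|G\|_{L^\infty}\le\|m_2\|_{L^\infty}\|\delta q\|_{L^\infty}$; the factor $\|m_2\|_{L^\infty}$ was controlled in the first step by quantities independent of $q_2$. Running the same bootstrap (energy estimate with zero initial data, De Giorgi iteration to produce $\delta m\in L^\infty$, then the $L^r$ divergence-form estimate applied to $\mathrm{div}(\delta m\,q_1+G)$) yields the announced bound $\|\delta m\|_{\mathcal H^1_r(Q)}\le C\|\delta q\|_{L^\infty(Q;\R^d)}$. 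The main technical obstacle is the first bootstrap step, namely promoting the weak energy solution to an $L^\infty$ (and H\"older) solution: the drift is merely bounded and the coefficient $\mathrm{div}(q_\iota)$ only makes sense distributionally, so one must work directly in divergence form and rely on De Giorgi-type truncation arguments rather than on a maximum principle. Once the $L^\infty$ bound is secured, the $L^r$ upgrade and the stability argument are straightforward applications of classical parabolic regularity.
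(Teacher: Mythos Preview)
The paper does not prove this proposition; it simply cites \cite[Proposition~A.3]{tang2022learning}, so there is no in-paper argument to compare against. Your bootstrap strategy (energy solution $\to$ De~Giorgi--Nash--Moser $L^\infty$/H\"older bound $\to$ $L^r$ maximal regularity for the heat operator with divergence-form source) is a correct and standard route to the $\mathcal H^1_r$ estimate, and the derivation of the equation for $\delta m$ together with the same bootstrap applied to the forced problem is the natural way to get the stability bound.

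One small slip to fix: you write that $\|m_2\|_{L^\infty}$ ``was controlled in the first step by quantities independent of $q_2$''. This is not accurate---$m_2$ solves the equation with drift $q_2$, so its $L^\infty$ bound depends on $\|q_2\|_{L^\infty(Q;\R^d)}$. This is harmless, since the constant $C$ in the statement is explicitly allowed to depend on $\|q_\iota\|_{L^\infty}$ for both $\iota=1,2$; just correct the phrasing. Also, when you ``run the same bootstrap'' on $\delta m$, make explicit that the De~Giorgi iteration for the forced equation $\partial_t\delta m-{\rm div}(\nabla\delta m+\delta m\,q_1)={\rm div}(G)$ with zero initial data produces a bound $\|\delta m\|_{L^\infty}\le C\|G\|_{L^p}$ (for suitable $p$) that is \emph{linear} in $G$, so that the final $\mathcal H^1_r$ estimate is indeed linear in $\|\delta q\|_{L^\infty}$; this is the point that closes the argument and deserves one line of justification.
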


 We show the well posedness of the MFG system with non-separable Hamiltonian by means of an argument similar to \cite[Lemma 4.1]{achdou2018}.

\begin{proof}[Proof of   \cref{Well posed}]
Define $\mathbf{X}:=\left\{m\in \mathcal C^0(Q): m>0, m(x,0)=m_0(x), \int_{\T^d}m(x,t)dx=1\right\}$. Consider the compact mapping $\tilde{m}=\mathbf{T}(m): \mathbf{X}\rightarrow \mathcal{C}^{\alpha,\alpha/2}(Q)$ ,
\be\label{fp appendix}
\left\{\begin{aligned}
(i) \qquad &-\partial_t u -\Delta u+H(x,m,Du)=f(m) \qquad &&{\rm in}\,\,Q,\\
(ii) \qquad &\partial_t \tilde{m}-  \Delta \tilde{m}  - \text{div} \big( \tilde{m}H_p(x,m,Du)\big) =0&&{\rm in}\,\,Q, \\
&\tilde{m}(x,0)=m_0(x) , \; u(x,T)=u_T(x)&&{\rm in}\,\,\T^d.
\end{aligned}\right.
\ee
$f: \mathbf{X}\rightarrow \mathbb{R}$ is uniformly bounded. By (A2), $H(x,m,p)\leq \bar C (|p|^2 + 1)$. It follows from the standard result of quasilinear parabolic equation (see Theorem 6.3 in Chapter 5 of \cite{LSU}) that \cref{fp appendix} (i) has a unique solution $u\in L^\infty(0,T;W^1_{\infty}(\T^d))$ and $\|u\|_{L^\infty(0,T;W^1_{\infty}(\T^d))}$ is bounded independently of $m$ in $\mathbf{X}$. From the boundedness of $\|Du\|_{L^\infty(Q)}$ and (A2), $H_m(x,m,p)$ is bounded independently of $m$ in $\mathbf{X}$. Under the additional assumption that $f'(m)$ is bounded independently of $m$ in $\mathbf{X}$,  it is easy to see that the map $m\mapsto u$ from $\mathbf{X}$ to $L^\infty(0,T;W^1_{\infty}(\T^d))$ is continuous. \par
\cref{fp appendix} (ii) has a unique solution $\tilde{m}\in \mathbf{X}\cap \mathcal{C}^{\alpha,\alpha/2}(Q)$, which is bounded in $\mathcal{C}^{\alpha,\alpha/2}(Q)$ uniformly with respect to $m$ in $\mathbf{X}$, see e.g. \cite{LSU}, Chapter 3, Theorem 10.1. From \cref{m stability} and the continuous embedding of $\mathcal{H}_r^1(Q)$ onto $\mathcal{C}^{\alpha,\alpha/2}(Q)$, the map $H_p(x,m,Du) \mapsto \tilde{m}$ is continuous from $L^\infty(Q)$ to $\mathcal{C}^{\alpha,\alpha/2}(Q)$. From (A2) $H_p(x,m,Du)$ is continuous with respect to both $m$ and $Du$, independently of $m$ in $\mathbf{X}$. As we have shown the map $m\mapsto u$ from $\mathbf{X}$ to $L^\infty(0,T;W^1_{\infty}(\T^d))$ is continuous, hence the map $m\mapsto \tilde{m}$ is continuous from $\mathbf{X}$ to $\mathbf{X}\cap \mathcal{C}^{\alpha,\alpha/2}(Q)$. We can obtain by Schauder fixed point theorem that the map $\mathbf{T}$ has at least one fixed point.  Regularity of the solution follows from assumptions (A1), (A2), (A3) and previously cited results in \cite{LSU}.
Uniqueness   follows from (A2) (\eqref{H Hessian} in particular) and (A3). We refer to \cite[Ch.1, Theorem 13]{achdouCetraro} for details.
\end{proof}

%%%%%%%%%%%%%%%%%
%\section{Declarations}
%
%\subsubsection*{Ethical Approval}
%Not applicable
%
%\subsubsection*{Competing interests} 
%All authors declare that they have no conflict of interest
%
%\subsubsection*{Authors' contributions}
%All authors have contributed equally
%
%\subsubsection*{Funding}
%No funding to declare
%
%\subsubsection*{Data Availability Statement}
%No Data associated in the manuscript.
%
%%%%%%%%%%%%%%

\begin{flushright}
	\noindent \verb"fabio.camilli@uniroma1.it"\\
	SBAI, Sapienza Universit\`{a} di Roma\\
	Roma (Italy)	
\end{flushright}

\begin{flushright}
	\noindent \verb"tangqingthomas@gmail.com"\\
	China University of Geosciences (Wuhan)\\
	Wuhan (China)	
\end{flushright} 

\end{document}